\newcommand{\superscript}[1]{\ensuremath{^{\textrm{#1}}}}
\def\wg{\superscript{\dag}} 
\newcommand{\E}{\mathbb{E}}
\newcommand{\N}{\mathbb{N}}
\newcommand{\Z}{\mathbb{Z}}
\newcommand{\R}{\mathbb{R}}
\newcommand{\Pb}{\mathbb{P}}
\def\={{\;\mathop{=}\limits^{\text{(law)}}\;}}
\newtheorem{theorem}{Theorem}[section]
\newtheorem{prop}[theorem]{Proposition}
\newtheorem{lemma}[theorem]{Lemma}
\newtheorem{defi}[theorem]{Definition}
\newtheorem{corol}[theorem]{Corollary}
\theoremstyle{definition}
\newtheorem{rem}[theorem]{Remark}
\newtheorem{exa}[theorem]{Example}
\numberwithin{equation}{section}
\date{}
\title{MRL order, log-concavity and an application to peacocks}
\author{ANTOINE MARIE BOGSO\wg\footnote{
Email address: ambogso@gmail.com, Tel.: +33(0)620150780.}\\
{\wg}Université de Lorraine\\
Institut Elie Cartan de Lorraine, UMR 7502\\
Vandoeuvre-lès-Nancy, F-54506, France.
}
\begin{document}
\maketitle
\noindent
{\footnotesize  {\bf Abstract: }
We provide an equivalent log-concavity condition to the mean residual life (MRL) ordering for real-valued processes. This result, combined with classical properties of total positivity of order 2, allows to exhibit new families of integrable processes which increase in the MRL order (MRL processes). Note that MRL processes with constant mean are   peacocks to which the Azéma-Yor (Skorokhod embedding) algorithm yields an explicit associated martingale.}\\
{\footnotesize  {\bf Keywords: } MRL order, log-concavity, peacocks, martingales, Markov processes.}
\\
\noindent
{\footnotesize {\bf MSC: }60E15, 60G44, 60J25, 32F17.}

\chead[\small ANTOINE MARIE BOGSO]{MRL order, log-concavity and an application to peacocks}

\section{Introduction}
The main objective of this paper is to exhibit new integrable processes which increase in the MRL order (MRL processes). For this end, we exploit the link between the MRL ordering and the notion of total positivity of order 2 to obtain several transformations that preserve the MRL ordering. Moreover, MRL processes with constant mean are peacocks and the connection between the MRL ordering and the Azéma-Yor algorithm allows to construct explicitly an associated martingale to each of them. \\  
Unless otherwise expressly mentioned, the processes and random variables in the sequel are supposed to be real-valued.

\subsection{Definition of the MRL order}
Let $X$ be an integrable random variable and let $\mu$ denote the law of $X$. The MRL function $L_{\mu}$ of $X$ may be defined on $\R$ as
$$
L_{\mu}(x)=\left\{
\begin{array}{ll}
\dfrac{1}{\mu([x,+\infty[)}
\displaystyle\int_{[x,+\infty[}(y-x)\mu(dy)&\text{if }x<b_{\mu},\\ &\\
0&\text{otherwise,}
\end{array}
\right.
$$
where $b_{\mu}=\inf\{z\in\R,\,\mu([z,+\infty[)=0\}$, $b_{\mu}\in(-\infty,+\infty]$. The MRL function is usually of interest for a nonnegative random variable. For instance, if $X$ is thought of as the lifetime of a device, then, for every $x\geq0$, $L_{\mu}(x)$ expresses the conditional expected residual life of the device at time $x$ given that the device is still alive at time $x$. But, there is no restriction on the support of $X$ here. Note that $L_{\mu}$ is positive, left-continuous and such that $x\longmapsto L_{\mu}(x)+x$ is non-decreasing. Moreover, if there exists some $x_0$ such that $L_{\mu}(x_0)=0$, then, for every $x\geq x_0$, $L_{\mu}(x)=0$. Otherwise, we have 
$$
\int_{0}^{+\infty}\frac{dx}{L_{\mu}(x)}=\infty.
$$  
We refer to Shaked-Shanthikumar \cite[Chapter 2]{ShS} for further interesting properties of MRL functions.

Let $X_1$ and $X_2$ be two integrable random variables. Let $\mu_1$, resp. $\mu_2$ denote the law $X_1$, resp. $X_2$. Then, $X_1$ is said to be smaller than $X_2$ in the MRL order if, 
$$
\forall\,x\in\R,\,L_{\mu_1}(x)\leq L_{\mu_2}(x).
$$ 
As many stochastic orders, the MRL order may provide good approximations and close bounds in situations where realistic stochastic models are too complex for rigorous computations. Here, the MRL ordering plays a quite different role. It is shown that the MRL order is also helpful in the construction of peacock processes and their associated martingales.  

\subsection{An application of the MRL ordering to peacocks}
\begin{defi}\label{defi:Pcoc}
A  process $(X_t,t\geq0)$ is said to be a peacock if it is integrable, i.e. 
$$
\forall\,t\geq0,\,\text{ }\E[|X_t|]<\infty,
$$
and if it increases in the convex order, i.e. for every convex function $\psi:\R\to\R$,
$$
t\in\R_+\longmapsto\E[\psi(X_t)]\in]-\infty,+\infty]\,\text{ is non-decreasing.}
$$  
\end{defi}
\noindent
To prove that an integrable process with constant mean is a peacock, we may restrict ourselves to the set of convex functions $z\longmapsto(z-x)^+$, $x\in\R$. This is the purpose of the next result taken from \cite[Section 3.A]{ShS}.
\begin{prop}\label{prop:PcocEquivdTail}
Let $(X_t,t\geq0)$ be an integrable  process such that $\E[X_t]$ does not depend on $t$. The following assertions are equivalent.
\begin{enumerate}
\item For every convex function $\psi:\R\to\R$, $t\longmapsto\E[\psi(X_t)]$ is non-decreasing.
\item For every $x\in\R$, $t\longmapsto\E[(X_t-x)^+]$ is non-decreasing.
\end{enumerate}
\end{prop}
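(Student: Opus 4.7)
The direction $(1) \Rightarrow (2)$ is immediate since for each $x \in \R$ the function $z \mapsto (z - x)^+$ is convex.

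For $(2) \Rightarrow (1)$, my plan has two steps. First, I would upgrade the hypothesis to cover also ``put'' payoffs, by exploiting the constant-mean assumption. The elementary identity $(z - x)^+ - (x - z)^+ = z - x$ yields
$$\E[(X_t - x)^+] - \E[(x - X_t)^+] = \E[X_t] - x,$$
a quantity independent of $t$. Hence $t \mapsto \E[(x - X_t)^+]$ is non-decreasing for every $x \in \R$ as soon as $(2)$ holds.

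Second, I would invoke the standard integral representation of convex functions: if $\nu$ denotes the nonnegative Radon measure on $\R$ associated to the distributional second derivative of $\psi$, then
$$\psi(z) = \psi(0) + \psi'_+(0)\, z + \int_{[0,\infty)} (z - x)^+ \, \nu(dx) + \int_{(-\infty, 0)} (x - z)^+ \, \nu(dx)$$
for all $z \in \R$. Substituting $z = X_t$, taking expectations, and applying Tonelli's theorem (legitimate because every integrand is nonnegative) gives
$$\E[\psi(X_t)] = \psi(0) + \psi'_+(0) \E[X_t] + \int_{[0,\infty)} \E[(X_t - x)^+] \, \nu(dx) + \int_{(-\infty,0)} \E[(x - X_t)^+] \, \nu(dx).$$
The first two terms are constant in $t$, while the two integrals are non-decreasing in $t$ by the preceding step; hence so is $t \mapsto \E[\psi(X_t)]$.

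The only point that demands a bit of care — and the only possible obstacle I foresee — is confirming that $\E[\psi(X_t)]$ is a priori well-defined in $]-\infty, +\infty]$. This is settled by the fact that $\psi$, being convex, is bounded below by an affine function, so $\psi(X_t)^-$ is integrable thanks to the integrability of $X_t$. The expectation is then meaningful and Tonelli applies without issue. Beyond this routine check, no deeper obstacle arises: the proof reduces to the decomposition of an arbitrary convex function into hockey-stick atoms combined with the put-call upgrade of hypothesis $(2)$.
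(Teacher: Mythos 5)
Your proof is correct, and it is essentially the standard argument: the paper itself gives no proof of this proposition but simply cites Shaked--Shanthikumar \cite[Section 3.A]{ShS}, whose proof rests on the same two ingredients you use, namely the put--call parity identity $(z-x)^{+}-(x-z)^{+}=z-x$ combined with the constant-mean hypothesis, and the representation of a convex function as an affine function plus a nonnegative mixture of the kernels $(z-x)^{+}$ and $(x-z)^{+}$. The only point worth a second glance is the placement of the atom of $\nu$ at $0$ in your integral representation (it should be absorbed into the term $\psi'_{+}(0)z$ rather than counted in $\int_{[0,\infty)}$, or else one should use $\psi'_{-}(0)$), but this bookkeeping detail does not affect the argument.
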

\noindent 
The peacock property involves only one-dimensional marginals. For this reason, we may alternatively define a peacock as a family of probability measures which increases in the convex order.
\begin{defi}
A family $(\mu_t,t\geq0)$ of probability measures on $\R$  is said to be a peacock if it is integrable, i.e.
$$
\forall\,t\geq0,\,\int_{\R}|y|\mu_t(dy)<\infty, 
$$
and if it increases in the convex order, i.e. for every convex function $\psi:\R\to\R$,
$$
t\longmapsto\int_{[x,+\infty[}\psi(y)\mu_t(dy)\,\text{ is non-decreasing.}
$$ 
\end{defi}

It is a direct consequence of Jensen's inequality that every martingale is a peacock. Conversely, a remarkable result due to Kellerer \cite{Kel} states that, for a given peacock $(X_t,t\geq0)$, there exists a martingale $(M_t,t\geq0)$ which
may be chosen Markovian such that 
\begin{equation}\label{eq:EqualInLaw}
\forall\,t\geq0,\,\text{ }M_t\=X_t.
\end{equation}   
If (\ref{eq:EqualInLaw}) holds, we shall say that the martingale $(M_t,t\geq0)$ is {\it associated} to the peacock $(X_t,t\geq0)$. Let us mention that  Hirsch-Roynette \cite{HR} provided recently an alternative proof of the Kellerer's theorem. 
However, since the proof  given by Kellerer is not constructive, it does not allow to obtain concretely an associated martingale to a given peacock. This is what motivated  the authors of \cite{BY, HPRY, MY} who exhibited several examples of peacocks and give some methods to construct explicitly associated martingales to many of them. One method studied in \cite{HPRY, MY} is the Azéma-Yor embedding algorithm which we now briefly describe. Let $(X_t,t\geq0)$ be a centered peacock. In particular, for every $t\geq0$, $\E[|X_t|]<\infty$ and $\E[X_t]=0$. For every $t\geq0$, we denote by $\mu_t$ the law of $X_t$. The Hardy-Littlewood function $\Psi_{\mu_t}$ of $\mu_t$ is given by:
$$
\Psi_{\mu_t}(x)=\left\{
\begin{array}{ll}
\dfrac{1}{\mu([x,+\infty[)}\displaystyle\int_{[x,+\infty[}y\mu(dy)&\text{if }x<b_{\mu_t},\\&\\
x&\text{otherwise,}
\end{array}
\right.
$$ 
where $b_{\mu_t}=\inf\{z\in\R,\,\mu([z,+\infty[)=0\}$. Observe that, if $L_{\mu_t}$ denotes the MRL function of $X_t$, then
$$
\forall\,x\geq0,\,\Psi_{\mu_t}(x)=L_{\mu_t}(x)+x.
$$
The Azéma-Yor solution to the Skorokhod embedding problem for $\mu_t$ (see e.g. \cite{AY} or \cite[Section 5]{Obl}) is the stopping time
$$
T_{\mu_t}=\inf\{v\geq0,\,S_v\geq\Psi_{\mu_t}(B_v)\},
$$
where $S_v=\sup\limits_{0\leq s\leq v}B_s$. If
\begin{equation}\label{eq:AYcondMart}
t\longmapsto T_{\mu_t}\,\text{ is a.s. non-decreasing,}
\end{equation}
then $(M_t:=B_{T_{\mu_t}},t\geq0)$ is a martingale associated to $(X_t,t\geq0)$. Moreover, it is proved in Madan-Yor \cite[Theorem 2]{MY} that $(M_t,t\geq0)$ is an inhomogeneous Markov process.  
But, the condition (\ref{eq:AYcondMart}) is equivalent to:
\begin{equation}\label{eq:AYcondMartII}
\forall\,x\in\R,\,\text{ }t\longmapsto\Psi_{\mu_t}(x)\text{ is non-decreasing.}
\end{equation}
Therefore, it remains to find sufficient conditions on $(X_t,t\geq0)$ under which (\ref{eq:AYcondMartII}) holds.
The condition ({\ref{eq:AYcondMartII}) means that $(X_t,t\geq0)$ increases in the MRL order. We mention that several examples of MRL ordered peacocks are given in \cite[Section 7.4]{HPRY} and in \cite[Section 3]{MY}. Further interesting classes of  MRL ordered peacocks are exhibited in Section 4 below.

\subsection{The aim and organization of this paper} 
We first prove the equivalence between the MRL ordering and a total positivity of order 2 property. Then, we use this result and some classical log-concavity properties to exhibit new families of MRL processes.    
 
The remainder of this paper is structured as follows. In the next section, we present some nice properties of $\R_+$-valued Markov processes which have totally positive transition kernels. 
In section 3, we give an equivalent condition to the MRL ordering using the notion of total positivity of order 2. Precisely, we prove that an integrable process $(X_t,t\geq0)$  increases in the MRL order if and only if its {\it integrated survival function} $C$ defined by:
$$
\forall\,(t,x)\in\R_+\times\R,\,C(t,x)=\E[(X_t-x)^{+}]
$$
is totally positive of order 2, i.e. for every $0\leq t_1\leq t_2$ and $x_1\leq x_2$,
$$
\det\begin{pmatrix}
C(t_1,x_1)&C(t_1,x_2)\\
C(t_2,x_1)&C(t_2,x_2)
\end{pmatrix}\geq0.
$$
Finally, in section 4, we exploit the previous equivalence and total positivity results to provide  new families of MRL ordered peacocks.    
 
\section{Log-concavity properties of nonnegative Markov processes}
We present some results due to Karlin \cite{KA} about $\R_+$-valued Markov processes which have a totally positive of order 2 transition kernel. Note that this family of Markov processes includes $\R_+$-valued processes with independent and log-concave increments, absolute values of processes with independent and symmetric PF$_{\infty}$ increments (see Example \ref{exa:PFfunctions} below for the definition of a PF$_{\infty}$ random variable), birth-death processes and $\R_+$-valued diffusions (we refer to \cite{KA} for more interesting examples). We start with some basic definitions and results.
\begin{defi}\label{defi:TP2Funct}
Let $I$ and  $J$ be subsets of $\R$ such that each is either an interval or a subset of the set of all integers (denoted by $\Z$). A function $p:I\times J\to \R_+$ is said to be totally positive of order 2 (TP$_2$) if, for every $x_1\leq x_2$, elements of $I$, and every $y_1\leq y_2$, elements of $J$,
\begin{equation}\label{eq:DefRTP2}
p
\begin{pmatrix}
x_1,x_2\\
y_1,y_2
\end{pmatrix}
:=\det
\begin{pmatrix}
p(x_1,y_1)&p(x_1,y_2)\\
p(x_2,y_1)&p(x_2,y_2)
\end{pmatrix}
\geq0.
\end{equation}
\end{defi}

\begin{rem}
Let $I$ and $J$ be given as in Definition \ref{defi:TP2Funct}. Then, for every function $p:I\times J\to\R_+$, the following assertions are equivalent.
\begin{enumerate}
\item $p$ is TP$_2$,
\item The set $D=\{(x,y)\in I\times J;\,p(x,y)>0\}$ is a sublattice of $I\times J$, i.e. $D$ satisfies 
\begin{equation*} 
\left.
\begin{array}{l}
\text{For every }x_1<x_2\text{ in }I\text{ and }y_1<y_2\text{ in }J,\text{ } (x_1,y_2)\in D\text{ and }\\ \\
   (x_2,y_1)\in D\text{ imply }
 (x_1,y_1)\in D\text{ and }(x_2,y_2)\in D 
\end{array}
\right\}
\end{equation*}
and $p$ is TP$_2$ on $D$.
\end{enumerate}
\end{rem}
\begin{exa}\label{exa:PFfunctions}
Let $X$ be a log-concave random variable, i.e. $X$ admits a probability density $p:\R\to\R_+$,
the set $S_p:=\{x\in\R:\,p(x)>0\}$ is an interval of $\R$ and $p$ is log-concave on $S_p$. Then, according to  Daduna-Szekli \cite[Theorem 4, Point 1)]{DS}, $p$ is a P\'olya frequency function of order 2 (PF$_2$ function), i.e. for every $x_1\leq x_2$ and every $y_1\leq y_2$,
\begin{equation}\label{eq:PF2}
\det\begin{pmatrix}
p(x_1-y_1)&p(x_1-y_2)\\
p(x_2-y_1)&p(x_2-y_2)
\end{pmatrix}\geq0
\end{equation}
which means that $(x,y)\longmapsto p(x-y)$ is TP$_2$ on $\R\times\R$. Many common random variables are log-concave. Indeed, Gaussian, uniform, exponential, binomial, negative binomial, geometric and Poisson  random variables are log-concave. There are also many common random variables which are not log-concave, for example those with heavy tailed densities. More generally, one calls P\'olya frequency function of order $m$ (PF$_m$ function), where $m\in\N^{\ast}$, a Lebesgue-integrable function $p:\R\to\R$ such that, for every $l\in\{1,\cdots,m\}$ and every real numbers $x_1\leq\cdots\leq x_l$, $y_1\leq\cdots\leq y_l$,
\begin{equation}\label{eq:PFm}
\det\left[(p(x_i-y_j))_{1\leq i,j\leq l}\right]\geq0.
\end{equation}
Observe that PF$_1$ functions are nonnegative functions.
A P\'olya frequency function (PF$_{\infty}$ function) is a function which is PF$_m$ for every $m\in\N^{\ast}$. A random variable is said to be PF$_m$, resp. PF$_{\infty}$ if $X$ admits a PF$_m$, resp. PF$_{\infty}$ probability density $p$. Schoenberg \cite{Sch} provided a characterization of PF$_{\infty}$ functions in terms of their Laplace transform. The discrete counterpart of Schoenberg's characterization was obtained by Edrei \cite{Ed}. Thanks to these results Karlin \cite{KA} proved that if $p$ is a symmetric PF$_{\infty}$ probability density, then the function $\mathbf{p}:\R_+\times\R_+\to\R_+$ defined by
$$
\mathbf{p}(x,y)=p(-x-y)+p(-x+y)
$$
is TP$_2$, i.e. for every nonnegative real numbers $x_1\leq x_2$, $y_1\leq y_2$, 
$$
\det\begin{pmatrix}
\mathbf{p}(x_1,y_1)&\mathbf{p}(x_1,y_2)\\
\mathbf{p}(x_2,y_1)&\mathbf{p}(x_2,y_2)
\end{pmatrix}\geq0.
$$
\end{exa}
The following result may be deduced from the classical Cauchy-Binet formula (see e.g. \cite[Chapter 0]{KAa} or \cite[Problem 11.1.28]{PS}). It is also a particular case of a famous result due to Prékopa \cite[Paragraph 1.1]{Ma}.  
\begin{prop}\label{prop:TPCompForml}
Let $I$, $J$ and  $K$ be subsets of $\R$ such that each is either an interval or a subset of $\Z$ and let $\sigma$ denote a positive measure on $J$. Let $p:I\times J\to\R_+$ and $q:J\times K\to\R_+$ be two TP$_2$ functions such that the product $r:I\times K\to\R_+$ given by:
$$
\forall\,x,z\in\R,\text{ }r(x,z)=\int_{\R}p(x,y)q(y,z)\sigma(dy)
$$
is finite. Then $r$ is TP$_2$.\\ In particular, if $I$ denotes either $\R$ or $\Z$ and if $p,q:I\to\R_+$  are two integrable log-concave functions, then their convolution product $r=p\ast q$ is also log-concave. 
\end{prop}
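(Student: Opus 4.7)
The plan is to prove the TP$_2$ property of $r$ by a direct symmetrization argument, and then derive the log-concavity statement as a corollary via the equivalence ``log-concave $\Leftrightarrow$ PF$_2$'' recalled in Example~\ref{exa:PFfunctions}.

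First, fix $x_1\leq x_2$ in $I$ and $z_1\leq z_2$ in $K$. Writing out the two products and applying Fubini (which is legitimate because $p,q\geq0$ and the diagonal products $r(x_i,z_j)$ are finite by assumption), I would obtain
\begin{equation*}
r(x_1,z_1)r(x_2,z_2)-r(x_1,z_2)r(x_2,z_1)=\int_{J\times J}p(x_1,y_1)p(x_2,y_2)\,M_q(y_1,y_2)\,\sigma(dy_1)\sigma(dy_2),
\end{equation*}
where
\begin{equation*}
M_q(y_1,y_2)=q(y_1,z_1)q(y_2,z_2)-q(y_1,z_2)q(y_2,z_1).
\end{equation*}
The key step is to symmetrize this double integral in $(y_1,y_2)$. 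Since $M_q(y_2,y_1)=-M_q(y_1,y_2)$, swapping the dummy variables $y_1\leftrightarrow y_2$ in half of the integral allows me to rewrite the right-hand side as
\begin{equation*}
\tfrac{1}{2}\int_{J\times J}\bigl[p(x_1,y_1)p(x_2,y_2)-p(x_1,y_2)p(x_2,y_1)\bigr]\,M_q(y_1,y_2)\,\sigma(dy_1)\sigma(dy_2).
\end{equation*}

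Now I would argue that the integrand is pointwise nonnegative. On the region $\{y_1\leq y_2\}$, the TP$_2$ property of $p$ (applied to the rows $x_1\leq x_2$ and columns $y_1\leq y_2$) gives that the bracketed factor is $\geq 0$, while the TP$_2$ property of $q$ gives $M_q(y_1,y_2)\geq 0$; their product is thus nonnegative. On the region $\{y_1>y_2\}$, both factors change sign simultaneously, so the product remains $\geq 0$. Integration against the positive measure $\sigma\otimes\sigma$ preserves this inequality, which establishes the TP$_2$ property of $r$.

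For the convolution corollary, let $p,q:I\to\R_+$ be integrable and log-concave, with $I=\R$ or $I=\Z$ (equipped with Lebesgue or counting measure, denoted $\sigma$). By the characterization recalled in Example~\ref{exa:PFfunctions}, the kernels $P(x,y):=p(x-y)$ and $Q(y,z):=q(y-z)$ are TP$_2$ on $I\times I$. The integrability of $p$ and $q$ ensures that
\begin{equation*}
R(x,z)=\int_I p(x-y)q(y-z)\,\sigma(dy)=(p\ast q)(x-z)
\end{equation*}
is finite (after the change of variable $u=y-z$), so the main part applies and $R$ is TP$_2$. Translation invariance of $R$ in $(x,z)$ then precisely says that $p\ast q$ is a PF$_2$ function, i.e. log-concave on its support, completing the proof. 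The only real subtlety is the symmetrization and sign bookkeeping in the first step; once the integrand is displayed as a product of two minors with matching signs, the rest is immediate.
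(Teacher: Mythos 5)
Your argument is correct, and it is essentially the standard proof of the order-$2$ case of Karlin's ``basic composition formula'' --- which is exactly what the paper invokes by citation (Cauchy--Binet, or Pr\'ekopa) without writing out a proof. So you have not taken a different route so much as supplied the self-contained verification that the paper delegates to the references: expand the $2\times2$ minor of $r$ as a double integral, antisymmetrize in $(y_1,y_2)$, and observe that the integrand becomes a product of a minor of $p$ and a minor of $q$ whose signs agree on each of the two regions $\{y_1\leq y_2\}$ and $\{y_1>y_2\}$. Two small points of bookkeeping are worth tightening. First, no Fubini-type hypothesis is needed beyond nonnegativity: the product of two finite integrals of nonnegative functions is a finite double integral by Tonelli, so the expansion of $r(x_1,z_1)r(x_2,z_2)-r(x_1,z_2)r(x_2,z_1)$ is automatically legitimate. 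Second, in the convolution corollary, integrability of $p$ and $q$ alone does not give finiteness of $p\ast q$ \emph{everywhere}; you should either note that a log-concave function with interval support is necessarily bounded (a concave function on an interval cannot blow up to $+\infty$ at an interior or accessible endpoint), whence $(p\ast q)(u)\leq\|p\|_{\infty}\|q\|_{1}<\infty$, or simply restrict the conclusion to the points where the convolution is finite. Relatedly, the conclusion ``$p\ast q$ is PF$_2$'' encodes both log-concavity \emph{and} interval support of the convolution, consistent with the paper's definition of a log-concave function in Example \ref{exa:PFfunctions}; with that reading your deduction via translation invariance of $R(x,z)=(p\ast q)(x-z)$ is exactly right.
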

The previous definition and results apply to the transition kernels of one-dimensional Markov processes.
\begin{defi}
Let $P:=(P_{s,t}(k,l),0\leq s<t,(k,l)\in I\times I)$ be the transition function of a continuous time Markov chain $(\Lambda_t,t\geq0)$ whose state space is a sub-interval $I$ of $\Z$. We say that $P$ is TP$_2$ if, for every $0\leq s<t$ and every $k_1\leq k_2$, $l_1\leq l_2$ elements of $I$,
\begin{equation}\label{eq:defiMarkovZTP2}
P_{s,t}\left(
\begin{array}{cc}
k_1,k_2\\
l_1,l_2
\end{array}
\right):=\det\left(
\begin{array}{cc}
P_{s,t}(k_1,l_1)&P_{s,t}(k_1,l_2)\\ &\\
P_{s,t}(k_2,l_1)&P_{s,t}(k_2,l_2)
\end{array}
\right)\geq0.
\end{equation}
If $(\Lambda_t,t\geq0)$ is homogeneous, then (\ref{eq:defiMarkovZTP2}) is equivalent to:
\begin{equation}\label{eq:defiMarkovHmgeneZTP2}
P_{t}\left(
\begin{array}{cc}
k_1,k_2\\
l_1,l_2
\end{array}
\right):=\det\left(
\begin{array}{cc}
P_{t}(k_1,l_1)&P_{t}(k_1,l_2)\\ &\\
P_{t}(k_2,l_1)&P_{t}(k_2,l_2)
\end{array}
\right)\geq0.
\end{equation}
\end{defi}
We also give a counterpart of the preceding definition for Markov processes with continuous state space.
\begin{defi}
Let $P:=(P_{s,t}(\theta,d\lambda),0\leq s<t,\theta\in I)$ be the transition kernel of a Markov process $((\Lambda_t,t\geq0),(\Pb_{\theta},\theta\in I))$ whose state space is a sub-interval $I$ of $\R$. $P$ is said to be {\em totally positive of order 2} (TP$_2$) if, for every $0\leq s<t$, every $\theta_1\leq\theta_2$ elements of $I$, and every Borel subsets $E_1$, $E_2$ of $I$ such that $E_1\leq E_2$ (i.e. $a_1\leq a_2$ for every $a_1\in E_1$ and $a_2\in E_2$), we have:
\begin{equation}\label{eq:defMarkovTP2}
P_{s,t}\left(
\begin{array}{cc}
\theta_1,\theta_2\\
E_1,E_2
\end{array}
\right):=\det\left(
\begin{array}{cc}
P_{s,t}(\theta_1,E_1)&P_{s,t}(\theta_1,E_2)\\ &\\
P_{s,t}(\theta_2,E_1)&P_{s,t}(\theta_2,E_2)
\end{array}
\right)\geq0.
\end{equation}
Suppose moreover that $(\Lambda_t,t\geq0)$ is homogeneous. Then $P$ is TP$_2$ if and only if 
\begin{equation}\label{eq:defHmgeneMarkovTP2}
P_{t}\left(
\begin{array}{cc}
\theta_1,\theta_2\\
E_1,E_2
\end{array}
\right):=\det\left(
\begin{array}{cc}
P_{t}(\theta_1,E_1)&P_{t}(\theta_1,E_2)\\ &\\
P_{t}(\theta_2,E_1)&P_{t}(\theta_2,E_2)
\end{array}
\right)\geq0.
\end{equation} 
\end{defi}
\begin{rem}
Let $P:=(P_{s,t}(\theta,d\lambda),0\leq s<t,\theta\in I)$ denote the transition kernel of a Markov process $((\Lambda_t,t\geq0),(\Pb_{\theta},\theta\in I))$ which takes values in a sub-interval $I$ of $\R$. Suppose that, for every $0\leq s<t$ and $\theta\in I$, $P_{s,t}(\theta,d\lambda)=p_{s,t}(\theta,\lambda)d\lambda$, where $p_{s,t}$ is continuous. Then $P$ is TP$_2$ if and only if $p_{s,t}$ is TP$_2$. 
\end{rem}
There are many interesting Markov processes which admit a TP$_2$ transition kernel. For example, processes with independent and log-concave increments, absolute values of processes with independent and symmetric PF$_{\infty}$ increments, birth-death processes, one-dimensional diffusions and the bridges of one-dimensional diffusions have  a TP$_2$ transition kernel (see e.g., \cite{Bo, KA, KaM, KAa}). 

Now, we restrict our attention to $\R_+$-valued Markov processes which admit a TP$_2$ transition kernel. In \cite{KA}, the author derives another type of totally positive kernels from these stochastic processes. These kernels have the particularity of involving variables one of which corresponds to the time while the other stands for the state of the process. The following results are simplified versions of those given in \cite{KA}.  
\begin{theorem}\label{theo:TP2MarkovChain}(Karlin \cite[Theorems 2.6 (2.16) and 4.3 (i)]{KA} ). 
\item[1)]Let $(\Lambda_n,n\geq0)$ be an homogeneous Markov chain whose state space is the set of nonnegative integers (denoted by $\N$). Suppose that the transition matrix $P$ of $(\Lambda_n,n\geq0)$ is TP$_2$. Then, 
\begin{equation}\label{eq:TP2ThDTMC}
Q:\,(n,i)\longmapsto \Pb[\Lambda_n=i|\Lambda_0=0]=P^n(0,i)\text{ is TP}_2\text{ on }\N\times\N.
\end{equation}
\item[2)]Let $(\Lambda_t,t\geq0)$ be a continuous-time, homogeneous Markov chain whose state space is $\N$. Suppose that $(\Lambda_t,t\geq0)$ is right-continuous. If, for every $t\geq0$, the transition matrix $(P_t(i,j);\,i,j\in\N)$ is TP$_2$, then 
\begin{equation}\label{eq:TP2ThCTMC}
Q:\,(t,i)\longmapsto \Pb[\Lambda_t=i|\Lambda_0=0]=P_t(0,i)\text{ is TP}_2\text{ on }\R_+\times\N.
\end{equation}
\end{theorem}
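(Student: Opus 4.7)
The plan is to exploit the Chapman-Kolmogorov semigroup identity in order to rewrite the $2\times 2$ minor of $Q$ as a nonnegative combination of $\text{TP}_2$ minors of a single power of $P$. Combined with Proposition \ref{prop:TPCompForml}, which states that composition preserves $\text{TP}_2$, this should yield both assertions by the same argument.

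For part 1), I would fix $0\le n_1\le n_2$ and $0\le i_1\le i_2$ in $\N$. Iterating Proposition \ref{prop:TPCompForml} with $\sigma$ the counting measure on $\N$, I first observe that $P^m$ is $\text{TP}_2$ for every $m\ge 0$. Then, applying the semigroup identity in the specific order $P^{n_2}=P^{n_2-n_1}\,P^{n_1}$, I get $Q(n_2,i)=\sum_{j\in\N} P^{n_2-n_1}(0,j)\,P^{n_1}(j,i)$, while of course $Q(n_1,i)=P^{n_1}(0,i)$. Expanding the relevant determinant by multilinearity in its second row yields
\[
\det\!\begin{pmatrix}Q(n_1,i_1)&Q(n_1,i_2)\\ Q(n_2,i_1)&Q(n_2,i_2)\end{pmatrix}=\sum_{j\in\N}P^{n_2-n_1}(0,j)\,\det\!\begin{pmatrix}P^{n_1}(0,i_1)&P^{n_1}(0,i_2)\\ P^{n_1}(j,i_1)&P^{n_1}(j,i_2)\end{pmatrix}.
\]
For every $j\in\N$ one has $0\le j$ and $i_1\le i_2$, so the inner $2\times 2$ determinant is a $\text{TP}_2$ minor of $P^{n_1}$ and hence nonnegative; since the weights $P^{n_2-n_1}(0,j)$ are also nonnegative, the sum is $\ge 0$.

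Part 2) should proceed identically after replacing $P^{n_2-n_1}\,P^{n_1}$ with the semigroup identity $P_{t_2}=P_{t_2-t_1}\,P_{t_1}$, which is legitimate under the right-continuity and time-homogeneity assumptions. Note that since each $P_t$ is directly assumed $\text{TP}_2$, Proposition \ref{prop:TPCompForml} is not even needed this time. The only genuinely delicate decision — and the main obstacle — is the orientation of the semigroup split: one must write the extra factor on the \emph{left} of $P^{n_1}$ (respectively $P_{t_1}$), so that both occurrences of the second factor appear with the \emph{same} time index but with two different starting states ($0$ and $j$); this is exactly what makes the bracket a $\text{TP}_2$ minor of a single kernel. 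The reversed split $P^{n_2}=P^{n_1}\,P^{n_2-n_1}$ would mix two different powers of $P$, and no $\text{TP}_2$ property of a single kernel would control the resulting cross terms.
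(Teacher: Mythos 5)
Your argument is correct: the expansion of the $2\times2$ minor of $Q$ via the split $P^{n_2}=P^{n_2-n_1}P^{n_1}$ (resp.\ $P_{t_2}=P_{t_2-t_1}P_{t_1}$) exhibits it as a nonnegative combination of minors of the single TP$_2$ kernel $P^{n_1}$ (resp.\ $P_{t_1}$) taken at rows $0\le j$ and columns $i_1\le i_2$, and your observation about the necessity of putting the extra factor on the left is exactly the right point. The paper gives no proof of this statement --- it is quoted from Karlin --- so there is nothing to compare against, but your route is the standard composition-formula argument underlying the cited result; note only that right-continuity plays no role in your proof (Chapman--Kolmogorov suffices), which is consistent with it being a hypothesis inherited from Karlin's broader setting rather than one your argument uses.
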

\noindent 
The next result is an analog of Theorem \ref{theo:TP2MarkovChain} for $\R_+$-valued Markov processes and its proof is similar to that of Theorem \ref{theo:TP2MarkovChain}.  
\begin{theorem}\label{theo:TP2MarkovProcess}(Karlin \cite[Theorems 3.8(i) and 5.2]{KA}). 
\item[1)]Let $(\Lambda_n,n\in\N)$ be an $\R_+$-valued homogeneous Markov process. Suppose that its transition kernel is of the form $P(\theta,d\lambda)=p(\theta,\lambda)d\lambda$, where $p$ is continuous and TP$_2$ on $\R_+\times\R_+$. Let $(p^{(m)},m\in\N^{\ast})$ be the functions defined recursively as follows: $p^{(1)}=p$ and 
\begin{equation}\label{eq:TP2Recursive}
\forall\,m\geq2,\,\forall\,(\theta,\zeta)\in\R_+\times\R_+,\,p^{(m)}(\theta,\zeta)=\int_0^{\infty}p^{(m-1)}(\theta,\lambda)p(\lambda,\zeta)d\lambda.
\end{equation}
Then, 
\begin{equation}\label{eq:TP2ThDTMP}
q:\,(n,\lambda)\longmapsto p^{(n)}(0,\lambda)\text{ is TP}_2\text{ on }\N^{\ast}\times\R_+.
\end{equation}
\item[2)]Let $(\Lambda_t,t\geq0)$ be a right-continuous homogeneous Markov process whose state space is $\R_+$. Suppose that the transition kernel of $(\Lambda_t,t\geq0)$ is of the form $P_t(\theta,d\lambda)=p_t(\theta,\lambda)d\lambda$, where, for every $t>0$, $p_t$ is continuous and TP$_2$ on $\R_+\times\R_+$. Then,  
\begin{equation}\label{eq:TP2ThCTMP}
q:\,(t,\lambda)\longmapsto p_t(0,\lambda)\text{ is TP}_2\text{ on }\R_+^{\ast}\times\R_+,
\end{equation}
where $\R_+^{\ast}$ denotes the set of positive real numbers.
\end{theorem}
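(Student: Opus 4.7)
The plan is to reduce both assertions to a single Chapman-Kolmogorov decomposition of the target $2\times 2$ determinant, combined with the TP$_2$ property of each iterate $p^{(m)}$. I would first establish by induction on $m\geq 1$ that $p^{(m)}:\R_+\times\R_+\to\R_+$ is itself TP$_2$: the base case $m=1$ is the hypothesis, and the inductive step is immediate from Proposition \ref{prop:TPCompForml} applied to the recursion (\ref{eq:TP2Recursive}). Integrability is not an issue because each $p^{(m)}(\theta,\cdot)$ is a (sub-)probability density on $\R_+$.

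For assertion (1), fix $1\leq n_1<n_2$ and $0\leq\lambda_1<\lambda_2$, set $k=n_2-n_1\geq 1$, and use the Chapman-Kolmogorov equation in the form
\[
p^{(n_2)}(0,\lambda)=\int_{0}^{+\infty} p^{(k)}(0,\mu)\, p^{(n_1)}(\mu,\lambda)\, d\mu.
\]
Substituting this into the second row of the $2\times 2$ matrix that defines $q\begin{pmatrix} n_1,n_2\\ \lambda_1,\lambda_2\end{pmatrix}$ and distributing the integral gives the key formula
\[
q\begin{pmatrix} n_1,n_2\\ \lambda_1,\lambda_2\end{pmatrix}
=\int_{0}^{+\infty} p^{(k)}(0,\mu)\, p^{(n_1)}\begin{pmatrix} 0,\mu\\ \lambda_1,\lambda_2\end{pmatrix}\, d\mu.
\]
The decisive observation is that the inner minor of $p^{(n_1)}$ is taken at rows $0$ and $\mu$; because the state space is $\R_+$ we have $0\leq\mu$, so the TP$_2$ property of $p^{(n_1)}$ proved in the preceding step forces this minor to be nonnegative. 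Combined with $p^{(k)}(0,\mu)\geq 0$, the integrand is nonnegative and (\ref{eq:TP2ThDTMP}) follows.

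Assertion (2) is obtained by the same calculation, with the continuous-time Chapman-Kolmogorov identity $p_{t_2}(0,\lambda)=\int_{0}^{+\infty} p_{t_2-t_1}(0,\mu)\, p_{t_1}(\mu,\lambda)\, d\mu$ (for $0<t_1<t_2$) in place of the iterated convolution; since each $p_t$ is continuous and TP$_2$ by hypothesis, no new ingredient is required. I expect the main technical subtlety to be the pointwise (rather than almost-everywhere) validity of Chapman-Kolmogorov for the densities, which is guaranteed by the continuity assumption on $p_t$ together with the right-continuity of $(\Lambda_t,t\geq 0)$; beyond that there is no substantial obstacle, since the entire argument turns on the single algebraic identity above together with the fact that $0$ is the smallest element of the state space, which is exactly what lets us invoke the TP$_2$ property of $p^{(n_1)}$ (resp. $p_{t_1}$) at the rows $0$ and $\mu$.
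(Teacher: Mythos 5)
Your argument is correct and is essentially the proof the paper points to: the paper does not prove this theorem itself but cites Karlin, and Karlin's argument is exactly your reduction, namely expanding the second row via the semigroup decomposition $p^{(n_2)}=p^{(n_2-n_1)}\circ p^{(n_1)}$ (resp. $p_{t_2}=p_{t_2-t_1}\circ p_{t_1}$) so that the determinant becomes a nonnegative mixture of $2\times2$ minors of $p^{(n_1)}$ (resp. $p_{t_1}$) taken at rows $0\leq\mu$, with the TP$_2$ property of the iterates supplied by Proposition \ref{prop:TPCompForml}. The key structural point you isolate --- that $0$ is the minimal state, so the fixed row of the minor is always the smallest --- is precisely what makes the theorem true for processes started at $0$ and false in general, so nothing is missing.
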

\noindent 
Theorems \ref{theo:TP2MarkovChain} and \ref{theo:TP2MarkovProcess} generally fail for inhomogeneous Markov processes. We refer to \cite[Section 2, pp. 50]{KA} for counterexamples. But, these results may be extended to random walks with $\R_+$-valued independent, non-stationary and log-concave increments.  
\begin{theorem}\label{theo:TP2RandWalk}(Karlin \cite[Theorem 3.7 (i)]{KA}).
Let $(\Lambda_n,n\in\N)$ be a random walk issued from $0$ with $\R_+$-valued independent  and log-concave increments (not necessary stationary). For every $n\in\N^{\ast}$, let $p_n$ denote the density of $\Lambda_n$. Then, 
\begin{equation}\label{eq:TPDeuxRandWalk}
q:(n,\lambda)\longmapsto p_n(\lambda)\text{ is TP}_2\text{ on }\N^{\ast}\times\R_+.
\end{equation}
\end{theorem}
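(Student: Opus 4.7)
The plan is to recast the TP$_2$ condition as monotonicity of the ratio $\lambda\mapsto p_{n_2}(\lambda)/p_{n_1}(\lambda)$ on $\{p_{n_1}>0\}$, and then combine the convolution structure of the random walk with the log-concavity of each $p_n$.

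I would first iterate the second assertion of Proposition \ref{prop:TPCompForml} to conclude that every $p_n=f_1\ast\cdots\ast f_n$ is log-concave on $\R_+$, where $f_i$ denotes the density of the $i$-th increment $Y_i$. Fixing $n_1<n_2$, the density $g$ of $Y_{n_1+1}+\cdots+Y_{n_2}$ is log-concave on $\R_+$ by the same proposition, and the independence of the increments gives the factorisation $p_{n_2}=p_{n_1}\ast g$.

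The core computation is then
\[
\frac{p_{n_2}(\lambda)}{p_{n_1}(\lambda)}=\int_{0}^{+\infty}\frac{p_{n_1}(\lambda-y)}{p_{n_1}(\lambda)}\,g(y)\,dy,
\]
after which it suffices to show that, for each fixed $y\geq 0$, the integrand $\lambda\mapsto p_{n_1}(\lambda-y)/p_{n_1}(\lambda)$ is non-decreasing on $\{p_{n_1}>0\}$. This is precisely a reformulation of the log-concavity of $p_{n_1}$: as recalled in Example \ref{exa:PFfunctions}, log-concavity of $p_{n_1}$ is equivalent to $(u,v)\mapsto p_{n_1}(u-v)$ being TP$_2$ on $\R\times\R$, which yields $p_{n_1}(\lambda_1-y)\,p_{n_1}(\lambda_2)\leq p_{n_1}(\lambda_1)\,p_{n_1}(\lambda_2-y)$ for $\lambda_1\leq\lambda_2$ and $y\geq 0$. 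Integrating a non-decreasing family of nonnegative quantities against the nonnegative measure $g(y)\,dy$ preserves the monotonicity, and the resulting monotonicity of $\lambda\mapsto p_{n_2}(\lambda)/p_{n_1}(\lambda)$ is exactly the desired TP$_2$ inequality $p_{n_1}(\lambda_1)\,p_{n_2}(\lambda_2)\geq p_{n_1}(\lambda_2)\,p_{n_2}(\lambda_1)$.

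The step I expect to be the main obstacle is the book-keeping for the degenerate cases in which $p_{n_1}$ or $p_{n_2}$ vanishes at one of the test points, since the ratio argument only operates on the set where $p_{n_1}$ is strictly positive. Because the increments are $\R_+$-valued, the support of $p_n$ is an interval $[\alpha_n,\beta_n]\subseteq\R_+$ with both $\alpha_n$ and $\beta_n$ non-decreasing in $n$; this monotonicity of the supports forces enough entries of the $2\times 2$ determinant to vanish when $\lambda_1$ or $\lambda_2$ lies outside the common range to make the TP$_2$ inequality automatic there, leaving only the regime where all four entries are positive, to which the ratio argument applies directly.
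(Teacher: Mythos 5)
Your proposal is correct. Note first that the paper does not prove this statement at all: it is quoted verbatim from Karlin \cite[Theorem 3.7 (i)]{KA}, so there is no in-paper proof to compare against. Your argument is a valid, essentially self-contained proof, and it follows the classical route behind Karlin's result: the factorisation $p_{n_2}=p_{n_1}\ast g$ with $g$ the (log-concave, $\R_+$-supported) density of $Y_{n_1+1}+\cdots+Y_{n_2}$, followed by the observation that the $2\times2$ determinant inequality is exactly the monotonicity of $\lambda\longmapsto p_{n_2}(\lambda)/p_{n_1}(\lambda)$, which you obtain by integrating the pointwise PF$_2$ inequality $p_{n_1}(\lambda_1-y)\,p_{n_1}(\lambda_2)\leq p_{n_1}(\lambda_1)\,p_{n_1}(\lambda_2-y)$ against $g(y)\,dy$. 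This is a hands-on instance of the composition formula of Proposition \ref{prop:TPCompForml} applied to the TP$_2$ kernels $(\lambda,y)\longmapsto p_{n_1}(\lambda-y)$ and $g$; working with general pairs $n_1<n_2$ rather than consecutive indices is a good choice, since it sidesteps the positivity issues that would arise in chaining ratio inequalities through intermediate indices. Your treatment of the degenerate cases is also the right one: the only way the determinant could be negative is if the off-diagonal product $p_{n_1}(\lambda_2)\,p_{n_2}(\lambda_1)$ is positive, and the monotonicity in $n$ of both endpoints of the support interval (a consequence of the increments being $\R_+$-valued and of the supports being intervals, which is part of the paper's definition of log-concavity in Example \ref{exa:PFfunctions}) then forces all four entries to be positive, so the ratio argument covers every non-trivial configuration. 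The only point worth stating explicitly rather than in passing is that the log-concavity of each $p_n$ (needed both for the PF$_2$ inequality and for its support being an interval) comes from iterating the convolution statement in Proposition \ref{prop:TPCompForml}, which you do invoke.
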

\noindent
In the next section, we prove the equivalence between the MRL ordering and a log-concavity  type condition. This equivalence, together with Theorems \ref{theo:TP2MarkovChain}-\ref{theo:TP2RandWalk}, allow to construct new classes of processes which increase in the MRL order. 

\section{An equivalent condition to the MRL ordering}
Let $(X_t,t\geq0)$ be an integrable process. For every $t\geq0$, $\mu_t$ denotes the law of $X_t$, and $\Psi_{\mu_t}$ is the Hardy-Littlewood function of $\mu_t$:
\begin{equation}\label{eq:DefMRLFunct}
\Psi_{\mu_t}(x)=
\left\{
\begin{array}{ll}
x+\dfrac{1}{\mu_t([x,+\infty[)}\displaystyle\int_{[x,+\infty[}(y-x)\mu_t(dy)&\text{if }x<b_{\mu_t},\\&\\
x&\text{if }x\geq b_{\mu_t},
\end{array}
\right.
\end{equation}
with $$b_{\mu_t}=\inf\{z\in\R,\,\mu_t([z,+\infty[)=0\}.$$ We set $\mu=(\mu_t,t\geq0)$, and we denote by $C_{\mu}$ the function given by:
\begin{equation}\label{eq:CmudTail}
\forall\,(t,x)\in\R_+\times\R,\,C_{\mu}(t,x)=\E[(X_t-x)^+]=\int_{[x,+\infty[}(y-x)\mu_t(dy).
\end{equation}
We shall prove that the family $(\Psi_{\mu_t},t\geq0)$ is non-decreasing in $t$ if and only if $C_{\mu}$ is TP$_2$. For this end, we need some preliminary results. We begin by observing that $C_{\mu}$ determines entirely  the family  $\mu=(\mu_t,t\geq0)$. Indeed, the following result is proved in Hirsch-Roynette \cite[Section 2]{HR} (see also Müller-Stoyan \cite[Theorem 1.5.10]{MS}).
\begin{prop}\label{prop:DefCallFunct}
Let $\nu$ be an integrable probability measure and let $C_{\nu}$ denote the integrated survival function of $\nu$, i.e.  $C_{\nu}(x)=\int_{\R}(y-x)^{+}\nu(dy)$ for every $x\in\R$. 
Then $C_{\nu}$ enjoys the following properties:
\begin{enumerate}
\item[i)]$C_{\nu}$ is a convex, nonnegative function on $\R$,
\item[ii)]$\lim\limits_{x\to+\infty}C_{\nu}(x)=0$,
\item[iii)]there exists $l\in\R$ such that $\lim\limits_{x\to-\infty}(C_{\nu}(x)+x)=l$.  
\end{enumerate}
Conversely, if a function $C$ satisfies the above three properties, then there exists a unique integrable probability measure $\nu$ such that $C_{\nu}=C$, i.e. $C$ is the integrated survival function of $\nu$.
Precisely, $\nu$ is the second order derivative of $C$ in the sense of distributions, and $l=\int_{\R}y\nu(dy)$. 
\end{prop}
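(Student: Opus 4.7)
The plan is to treat the two implications separately; the forward direction is essentially bookkeeping, while the converse relies on the asymptotic conditions (ii) and (iii) to pin down both the total mass and the first moment of $\nu$.

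For the forward direction, since for each fixed $y$ the map $x \mapsto (y-x)^+$ is convex and nonnegative, property (i) is obtained by integration against $\nu$. Property (ii) follows from dominated convergence: $(y-x)^+ \to 0$ pointwise as $x\to+\infty$ and is dominated by $y^+$, which is $\nu$-integrable. For (iii), I would use the pointwise identity $(y-x)^+ + x = \max(y,x)$, which gives
$$ C_\nu(x) + x = \int_{\R} \max(y,x)\,\nu(dy). $$
As $x\to-\infty$, $\max(y,x)$ decreases pointwise to $y$ and is dominated in absolute value by $|y|+|x_0|$ for $x\leq x_0$; dominated convergence yields the finite limit $l = \int_{\R} y\,\nu(dy)$.

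For the converse, I would set $\nu := C''$ in the sense of distributions. Since $C$ is convex, $\nu$ is a nonnegative Radon measure on $\R$, and the right derivative $g := C'_+$ is non-decreasing, right-continuous, and satisfies $\nu((a,b]) = g(b) - g(a)$. The two critical steps are to show $g(+\infty)=0$ and $g(-\infty)=-1$, which together yield $\nu(\R)=1$. For the first, (ii) combined with $C\geq 0$ forces $g\leq 0$ (otherwise convexity would drive $C\to+\infty$ at $+\infty$), and by monotonicity $g(+\infty)=0$. For the second, I would use the asymptotic $C(x) = -x + l + o(1)$ from (iii) in conjunction with the convexity inequality $g(x-1)\leq C(x)-C(x-1)\leq g(x)$; the middle term tends to $-1$, so the monotonicity of $g$ forces $g(-\infty)=-1$.

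Once $\nu$ is identified as a probability measure, Fubini's theorem gives
$$ C_\nu(x) = \int_x^{+\infty} \nu((t,+\infty))\,dt = \int_x^{+\infty} \bigl(-g(t)\bigr)\,dt = C(x), $$
using the absolute continuity of the convex function $C$ together with $C(+\infty)=0$. The integrability of $\nu$ and the identification $l = \int_{\R} y\,\nu(dy)$ then follow by applying the already-established forward direction to $\nu$. Uniqueness is immediate, since any two integrable probability measures with the same integrated survival function have the same second distributional derivative and hence coincide. The main obstacle I expect is the asymptotic identification $g(-\infty)=-1$: condition (iii) is the only place where the total mass of $\nu$ gets pinned down, and extracting the correct slope at $-\infty$ from the affine asymptote $C(x)\sim -x+l$ must be coupled carefully with convexity.
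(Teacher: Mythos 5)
The paper does not actually prove this proposition: it is stated with a pointer to Hirsch--Roynette \cite[Section 2]{HR} and M\"uller--Stoyan \cite[Theorem 1.5.10]{MS}, so there is no in-paper argument to compare against. Your proposal supplies a complete, self-contained proof, and the route you take (identity $(y-x)^+ + x = \max(y,x)$ for the direct part; $\nu := C''$ with the right derivative $g=C'_+$, the slope asymptotics $g(+\infty)=0$, $g(-\infty)=-1$, and the Tonelli identity $C_\nu(x)=\int_x^{+\infty}\nu(]t,+\infty[)\,dt$ for the converse) is exactly the standard one used in those references. All the main steps are sound. One small logical wrinkle: you say the integrability of $\nu$ ``follows by applying the already-established forward direction to $\nu$,'' but the forward direction assumes integrability as a hypothesis, so as written this is circular. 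The fix is one line and uses only what you have already established: $\int y^+\,\nu(dy)=C_\nu(0)=C(0)<\infty$ from the Tonelli identity, and since $C_\nu(x)+x=\int\max(y,x)\,\nu(dy)$ decreases to $\int y\,\nu(dy)$ as $x\to-\infty$ by monotone convergence, the finiteness of the limit $l$ in (iii) forces $\int y^-\,\nu(dy)<\infty$ and identifies $l=\int y\,\nu(dy)$. With that adjustment the argument is complete; you correctly identified that the only delicate point is extracting $g(-\infty)=-1$ from the affine asymptote, and your sandwich $g(x-1)\leq C(x)-C(x-1)\leq g(x)$ handles it cleanly.
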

We also use the following result which states that the TP$_2$ property of $C_{\mu}$ is stronger than the increasing convex order.
\begin{prop}\label{prop:TP2vICO}
We suppose that the function $C_{\mu}$ given by (\ref{eq:CmudTail}) is TP$_2$. Then, for every $x\in\R$, $t\longmapsto C_{\mu}(t,x)$ is non-decreasing. Moreover, if $(X_t,t\geq0)$ has a constant mean, then $(X_t,t\geq0)$ is a peacock.  
\end{prop}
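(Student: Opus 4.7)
The plan is to extract the monotonicity $C_\mu(t_1,x)\le C_\mu(t_2,x)$ directly from the TP$_2$ hypothesis by letting an auxiliary variable tend to $-\infty$, where the asymptotic identity $C_\mu(t,x_1)\sim -x_1$ provided by Proposition \ref{prop:DefCallFunct}(iii) supplies the right normalization. The peacock statement will then follow immediately from Proposition \ref{prop:PcocEquivdTail}.

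Concretely, I would fix $x\in\R$ and $0\le t_1\le t_2$. For an auxiliary point $x_1<\min(x,0)$, the TP$_2$ property of $C_\mu$ applied with indices $(t_1,t_2)$ and $(x_1,x)$ gives
\[
C_\mu(t_1,x_1)\,C_\mu(t_2,x)\;\ge\;C_\mu(t_1,x)\,C_\mu(t_2,x_1).
\]
Dividing both sides by $|x_1|>0$ and then letting $x_1\to-\infty$, I would invoke Proposition \ref{prop:DefCallFunct}(iii), namely $C_\mu(t,x_1)+x_1\to \E[X_t]$ as $x_1\to-\infty$, to deduce that $C_\mu(t,x_1)/|x_1|\to 1$ for every $t\ge 0$. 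Taking the limit in the displayed inequality yields $C_\mu(t_2,x)\ge C_\mu(t_1,x)$, as required.

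This route handles all cases uniformly, including the degenerate situation where $C_\mu(t_2,x)=0$: working directly with the TP$_2$ inequality at finite $x_1$ would force a case split (for instance, one would have to argue separately that $X_{t_2}\le x$ a.s. forces $X_{t_1}\le x$ a.s.), whereas the normalization by $|x_1|$ eliminates the need for this.

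For the second assertion, under the constant-mean hypothesis the first part shows precisely that condition (2) of Proposition \ref{prop:PcocEquivdTail} holds, so $(X_t,t\ge0)$ increases in the convex order and is therefore a peacock. I do not anticipate any substantial obstacle: the only delicate step is identifying the correct normalization when sending $x_1\to-\infty$, and property (iii) of Proposition \ref{prop:DefCallFunct} supplies it directly.
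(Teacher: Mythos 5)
Your argument is correct and follows essentially the same route as the paper: apply the TP$_2$ inequality with the second spatial index at $x$ and the first sent to $-\infty$, using property (iii) of Proposition \ref{prop:DefCallFunct} to identify the limit, then invoke Proposition \ref{prop:PcocEquivdTail} for the peacock conclusion. The only difference is that you normalize by $|x_1|$ rather than by the ratio $C_{\mu}(t_1,\cdot)/C_{\mu}(t_2,\cdot)$ as the paper does, which indeed spares you the paper's preliminary case split on whether $C_{\mu}(t_1,x)=0$; this is a minor but genuine streamlining.
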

\begin{proof}
Let $0\leq s<t$ and $x\in\R$. We wish to show the inequality
\begin{equation}\label{eq:PcocI}
C_{\mu}(s,x)\leq C_{\mu}(t,x).
\end{equation}
If $C_{\mu}(s,x)=0$, then (\ref{eq:PcocI}) is obvious. Suppose that $C_{\mu}(s,x)>0$. Since $a\longmapsto C_{\mu}(s,a)$ is non-increasing, then, for every $z\leq x$, $C_{\mu}(s,z)>0$. Moreover, the TP$_2$ property of $C_{\mu}$ yields:
\begin{equation}\label{eq:TP2Ipcoc}
\forall\,z\leq x,\quad\frac{C_{\mu}(t,z)}{C_{\mu}(s,z)}\leq \frac{C_{\mu}(t,x)}{C_{\mu}(s,x)}.
\end{equation}
But, it follows from Point iii) in Proposition \ref{prop:DefCallFunct} that 
$$
\lim\limits_{z\to-\infty}\frac{C_{\mu}(t,z)}{z}=\lim\limits_{z\to-\infty}\frac{C_{\mu}(s,z)}{z}=-1.
$$
Thus, letting $z$ tend  to $-\infty$ in (\ref{eq:TP2Ipcoc}), we obtain
$$
1=\lim\limits_{z\to-\infty}\frac{C_{\mu}(t,z)}{C_{\mu}(s,z)}\leq\frac{C_{\mu}(t,x)}{C_{\mu}(s,x)}.
$$
If, in addition, $(X_t,t\geq0)$ has a constant mean, then we deduce from Proposition \ref{prop:PcocEquivdTail} that $(X_t,t\geq0)$ is a peacock.
\end{proof}
We now give an equivalent condition to the MRL ordering.
\begin{theorem}\label{theo:IMRVTP2Tail}
The process $(X_t,t\geq0)$ increases in the MRL order if and only if its integrated survival function $C_{\mu}$ is TP$_2$, i.e. for every $0\leq t_1\leq t_2$ and $x_1\leq x_2$,
\begin{equation}\label{eq:MRLTailTP2}
C_{\mu}\begin{pmatrix}
t_1,t_2\\
x_1,x_2
\end{pmatrix}=
\det
\begin{pmatrix}
C_{\mu}(t_1,x_1)&C_{\mu}(t_1,x_2)\\
C_{\mu}(t_2,x_1)&C_{\mu}(t_2,x_2)
\end{pmatrix}
\geq0.
\end{equation}
\end{theorem}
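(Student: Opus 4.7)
The plan is to reduce both conditions to the single statement that, for each $0 \leq t_1 \leq t_2$, the log-ratio $x \mapsto \log C_\mu(t_2, x) - \log C_\mu(t_1, x)$ is non-decreasing on the interval $(-\infty, b_{\mu_{t_1}})$ on which both $C_\mu(t_1, x)$ and $C_\mu(t_2, x)$ are strictly positive.

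The key ingredient is the identity $C_\mu(t, x) = \int_x^{+\infty} \mu_t([u, +\infty[)\,du$ obtained from Fubini; since $u \mapsto \mu_t([u, +\infty[)$ is left-continuous, this makes $C_\mu(t, \cdot)$ continuous, convex and strictly positive on $(-\infty, b_{\mu_t})$, with left derivative $\partial^-_x C_\mu(t, x) = -\mu_t([x, +\infty[)$. Hence on that interval
\[
\partial^-_x \log C_\mu(t, x) = \frac{-\mu_t([x, +\infty[)}{C_\mu(t, x)} = -\frac{1}{L_{\mu_t}(x)}.
\]

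With this identity in hand both directions follow immediately. For the TP$_2$ side, on the non-degenerate region $x_1 \leq x_2 < b_{\mu_{t_1}}$ I would divide (\ref{eq:MRLTailTP2}) by $C_\mu(t_1, x_1) C_\mu(t_1, x_2) > 0$ to see that (\ref{eq:MRLTailTP2}) is precisely the monotonicity of $R_{t_1, t_2}(x) := C_\mu(t_2, x)/C_\mu(t_1, x)$ on $(-\infty, b_{\mu_{t_1}})$; the remaining cases $x_2 \geq b_{\mu_{t_1}}$ make the determinant automatically nonnegative, either because the first row vanishes or because it reduces to $C_\mu(t_1, x_1) C_\mu(t_2, x_2) \geq 0$. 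For the MRL side, the inequality $L_{\mu_{t_1}}(x) \leq L_{\mu_{t_2}}(x)$ at any $x < b_{\mu_{t_1}}$ is, by the displayed identity, equivalent to $\partial^-_x \log C_\mu(t_1, x) \leq \partial^-_x \log C_\mu(t_2, x)$, i.e.\ to $\log R_{t_1, t_2}$ having non-negative left derivative everywhere on $(-\infty, b_{\mu_{t_1}})$. Since $\log R_{t_1, t_2}$ is continuous there, the classical fact that a continuous function with everywhere non-negative left derivative is non-decreasing closes the equivalence.

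It remains to reconcile the boundary: under the MRL order, the implication $L_{\mu_{t_2}}(x) = 0 \Rightarrow L_{\mu_{t_1}}(x) = 0$ forces $b_{\mu_{t_1}} \leq b_{\mu_{t_2}}$, and on $[b_{\mu_{t_1}}, +\infty[$ the MRL inequality $0 \leq L_{\mu_{t_2}}(x)$ is trivial; under TP$_2$, the same inclusion follows from the increasing convex ordering established in Proposition \ref{prop:TP2vICO}. The main obstacle is thus not any deep analytic step but a careful bookkeeping of these endpoint and degenerate cases --- in particular, systematically using the \emph{left} derivative so that atoms of $\mu_t$ do not spoil the formula $\partial^-_x \log C_\mu(t, x) = -1/L_{\mu_t}(x)$ --- so that the whole argument really does reduce to this one-line identity.
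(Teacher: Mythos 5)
Your proposal is correct and follows essentially the same route as the paper: both reduce the TP$_2$ inequality to monotonicity of the ratio $x\longmapsto C_{\mu}(t_2,x)/C_{\mu}(t_1,x)$ on the interval where $C_{\mu}(t_1,\cdot)>0$, and both establish that monotonicity by computing a left derivative --- your identity $\partial_x^-\log C_{\mu}(t,x)=-1/L_{\mu_t}(x)$ is exactly the quotient-rule computation $C_{\mu}^2(s,a)F'_{s,t}(a)=\mu_s([a,+\infty[)\mu_t([a,+\infty[)\left(\Psi_{\mu_t}(a)-\Psi_{\mu_s}(a)\right)$ in logarithmic form, since $\Psi_{\mu_t}(a)-\Psi_{\mu_s}(a)=L_{\mu_t}(a)-L_{\mu_s}(a)$. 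Your handling of the degenerate cases ($x\geq b_{\mu_{t_1}}$, vanishing entries, and the inclusion $b_{\mu_{t_1}}\leq b_{\mu_{t_2}}$ via Proposition \ref{prop:TP2vICO} in one direction and the increasing convex order in the other) matches the paper's bookkeeping as well.
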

\begin{proof}
First, we recall that, for every $t\geq0$, $C_{\mu}(t,\cdot):x\longmapsto C_{\mu}(t,x)$ is continuous and 
left-differentiable, since it is convex. Moreover, if $C'_{\mu}(t,\cdot)$ denotes its left-derivative function, then, for every $x\in\R$, $C'_{\mu}(t,x)=-\mu_t([x,+\infty[)$.\\
Now, we suppose that $(X_t,t\geq0)$ increases in the MRL order, i.e. the family of functions $(\Psi_{\mu_v},v\geq0)$, defined by (\ref{eq:DefMRLFunct}), is non-decreasing in $v$. Let $0\leq s<t$ and $x\leq y$. We shall show that 
\begin{equation}\label{eq:TP2proof}
C_{\mu}(s,x)C_{\mu}(t,y)\geq C_{\mu}(t,x)C_{\mu}(s,y).
\end{equation}
If $C_{\mu}(s,y)=0$, then (\ref{eq:TP2proof}) is immediate. Suppose that $C_{\mu}(s,y)>0$. From \cite[Theorem 4.A.26]{ShS}, we know that the MRL ordering entails the increasing convex ordering. Therefore, $C_{\mu}(t,y)>0$.
Moreover, since $a\longmapsto C_{\mu}(s,a)$ and $a\longmapsto C_{\mu}(t,a)$ are non-increasing, we have:
\begin{equation}\label{eq:CmusCmutzleqy}
\forall\,a\leq y,\,C_{\mu}(s,a)>0\,\text{ and }\,C_{\mu}(t,a)>0.
\end{equation} 
Hence, (\ref{eq:TP2proof}) is equivalent to
\begin{equation}\label{eq:TP2proof2}
\frac{C_{\mu}(t,y)}{C_{\mu}(s,y)}\geq \frac{C_{\mu}(t,x)}{C_{\mu}(s,x)}.
\end{equation}
We also deduce from (\ref{eq:CmusCmutzleqy}) that, for every $a\leq y$, $\mu_s([a,+\infty[)>0$ and $\mu_t([a,+\infty[)>0$.\\
To obtain (\ref{eq:TP2proof2}), it suffices to show that 
$$
F_{s,t}:a\longmapsto\frac{C_{\mu}(t,a)}{C_{\mu}(s,a)}\,\text{ is non-decreasing on }]-\infty,y].
$$
Observe that $F_{s,t}$ is continuous and left-differentiable on $]-\infty,y]$, and if $F'_{s,t}$ denotes its left-derivative, then, for every $a\in]-\infty,y]$, we have:
\begin{align*}
&C_{\mu}^2(s,a)F'_{s,t}(a)\\
&=-C'_{\mu}(s,a)C_{\mu}(t,a)+C'_{\mu}(t,a)C_{\mu}(s,a)\\
&=\mu_s([a,+\infty[)C_{\mu}(t,a)-\mu_t([a,+\infty[)C_{\mu}(s,a)\\
&=\mu_s([a,+\infty[)\int_{[a,+\infty[}y\mu_t(dy)-\mu_t([a,+\infty[)\int_{[a,+\infty[}y\mu_s(dy)\\
&=\mu_{s}([a,+\infty[)\mu_t([a,+\infty[)\left(\Psi_{\mu_t}(a)-\Psi_{\mu_s}(a)\right)\geq0
\end{align*}
since, according to the hypothesis, $\Psi_{\mu_t}(a)\geq\Psi_{\mu_s}(a)$. Then, $F_{s,t}$ is non-decreasing and (\ref{eq:TP2proof2}) holds.\\
Conversely, suppose that $C_{\mu}$ is TP$_2$. By Proposition \ref{prop:TP2vICO}, $v\longmapsto C_{\mu}(v,x)$ is non-decreasing. Fix $x\in\R$ and $0\leq s\leq t$.  
If $x\geq b_{\mu_s}=\inf\{y\in\R,\,\mu_s([y,+\infty[)=0\}$, then, by definition of $\Psi_{\mu_s}$ and $\Psi_{\mu_t}$, 
$$
\Psi_{\mu_t}(x)\geq x=\Psi_{\mu_s}(x).
$$
Now, suppose that $x<b_{\mu_s}$. Then,  $C_{\mu}(s,x)>0$ and, as a consequence, $\mu_s([x,+\infty[)>0$. Moreover, since $v\longmapsto C_{\mu}(v,x)$ is non-decreasing, we also have $C_{\mu}(t,x)>0$ and $\mu_t([x,+\infty[)>0$. Hence, by the TP$_2$ property of  $C_{\mu}$,   
$$
F_{s,t}:a\longmapsto\frac{C_{\mu}(t,a)}{C_{\mu}(s,a)}
$$ 
is non-decreasing and left-differentiable on $]-\infty,x]$. In particular, the left-derivative of $F_{s,t}$ at $x$ is nonnegative, i.e.
\begin{align*}
0&\leq C^2(s,x)F'_{s,t}(x)\\
&=\mu_s([x,+\infty[)\int_{[x,+\infty[}y\mu_t(dy)-\mu_t([x,+\infty[)\int_{[x,+\infty[}y\mu_s(dy)  \\
&=\mu_{s}([x,+\infty[)\mu_t([x,+\infty[)\left(\Psi_{\mu_t}(x)-\Psi_{\mu_s}(x)\right)
\end{align*}
which shows that $t\longmapsto \Psi_{\mu_t}(x)$ is non-decreasing.
\end{proof} 
\begin{rem}
\item[1)]For certain processes, the equivalence between the MRL ordering and Condition (\ref{eq:MRLTailTP2}) (in Theorem \ref{theo:IMRVTP2Tail}) may be obtained by using a similar argument as in the proof of Theorem 1.8.4 in Müller-Stoyan \cite{MS}. Indeed, if $(X_t,t\geq0)$ is an integrable process such that each $X_t$ admits a positive density, then, for every $t\geq0$, the integrated survival function $C_t$ and the MRL function $L_{t}$ of $X_t$ are linked by the relation:
$$
\frac{d}{dx}\ln C_t(x)=-\frac{1}{L_t(x)}
$$
which yields that $(X_t,t\geq0)$ is MRL ordered if and only if Condition (\ref{eq:MRLTailTP2}) is fulfilled. Then, we obtain an alternative proof of the  direct implication part of Theorem \ref{theo:IMRVTP2Tail} by considering, for every $\varepsilon>0$, the process $(X_t^{\varepsilon},t\geq0)$ defined as
$$
X_t^{\varepsilon}=X_t+\varepsilon G,
$$
where $G$ is a reduced Gaussian random variable independent of $(X_t,t\geq0)$. Observe that each $X^{\varepsilon}_t$ admits a positive density, and if $(X_t,t\geq0)$ is MRL ordered, then, for every $\varepsilon>0$, $(X_t^{\varepsilon},t\geq0)$ is MRL ordered (see e.g. \cite[Lemma 2.A.8]{ShS}). Furthermore, if $C_t^{\varepsilon}$ denotes the integrated survival function  of $X_t^{\varepsilon}$, then, since $C_t$ is continuous, $(t,x)\longmapsto C_t(x)$ is TP$_2$ if and only if, for every $\varepsilon>0$, $(t,x)\longmapsto C^{\varepsilon}_t(x)$ is TP$_2$. 
\item[2)]In the case where $X_t=t X$, Condition (\ref{eq:MRLTailTP2}) rewrites
\begin{equation}\label{eq:MYTailTP2}
(t,x)\longmapsto tC\left(\frac{x}{t}\right)\text{ is TP}_2\text{ on }\R_+^{\ast}\times\R,
\end{equation}
where $C$ denotes the integrated survival function of $X$. If $\Psi$ is the Hardy-Littlewood function of $X$, then, by Theorem \ref{theo:IMRVTP2Tail}, the assertion (\ref{eq:MYTailTP2}) holds if and only if $(tX,t\geq0)$ is a MRL process, i.e.
\begin{equation}\label{eq:MYa}
\forall\,x\in\R,\,t\longmapsto t\Psi\left(\frac{x}{t}\right)\text{ is non-decreasing on }\R_+^{\ast}.
\end{equation}
Observe that, if $x\leq0$, then $t\longmapsto t\Psi(x/t)$ is non-decreasing since $\Psi$ is non-decreasing and nonnegative. Hence, (\ref{eq:MYa}) is equivalent to:
\begin{equation}\label{eq:MYb}
\forall\,x\in\R^{\ast}_+,\,t\longmapsto t\Psi\left(\frac{x}{t}\right)\text{ is non-decreasing on }\R_+^{\ast}
\end{equation}
which in turn is equivalent to the following Madan-Yor condition:
\begin{equation}\label{eq:MY} 
a\longmapsto\frac{\Psi(a)}{a}\text{ is non-increasing on }\R_+^{\ast}.
\end{equation}
Note that Condition (\ref{eq:MY}) means that $a\longmapsto\Psi(a)-\Psi(0)$ is {\it antistarshaped} at $0$ on $\R_+^{\ast}$. We refer to \cite[Page 237]{MS} for the definition of an antistarshaped function.\\
On the other hand, if we denote by $L$ the MRL function of $X$, then 
$$
\forall\,(t,x)\in\R^{\ast}_+\times\R,\text{ }t\Psi\left(\frac{x}{t}\right)=tL\left(\frac{x}{t}\right)+x
$$
and, as a consequence, (\ref{eq:MYb}) holds if and only if
\begin{equation}\label{eq:MYaMRL}
\forall\,x\in\R^{\ast}_+,\,t\longmapsto tL\left(\frac{x}{t}\right)\text{ is non-decreasing on }\R_+^{\ast}.
\end{equation} 
In particular, if $L$ is non-increasing, then (\ref{eq:MYaMRL}) holds. In other terms, if $X$ is a decreasing mean residual life  (DMRL) random variable, then $(tX,t\geq0)$ is a MRL process. This result extends Theorem 2.A.17 in Shaked-Shanthikumar \cite{ShS} to DMRL random variables whose supports are not contained in $\R_+$. 
\end{rem}
Theorem \ref{theo:IMRVTP2Tail} shall play an essential role in the last section where we exhibit new families of peacocks which increase in the MRL order.       
 
\section{Some examples of MRL processes}
There exists several classes of MRL processes. We start with time-dependent transformations of random variables. Such processes are exhibited by computing directly the corresponding Hardy-Littlewood functions.  Then, we apply Theorem \ref{theo:IMRVTP2Tail} and some closure properties of total positivity of order 2 to obtain other interesting families of MRL processes. In particular, we show that numerous subordinated MRL processes still increase in the MRL order. 
\subsection{Time-dependent transformations of random variables}
\subsubsection{Scale transformations}
If $Y$ is an integrable and centered random variable, then 
\begin{equation}\label{eq:ScaleMix1}
(\lambda Y,\lambda\geq0)\text{ is a centered MRL process}
\end{equation}
if and only if the Madan-Yor condition (\ref{eq:MY}) holds. Such examples, which orginated to Madan-Yor \cite{MY}, are also studied in Lim {\it et al. }\cite{LYY} and in Hirsch {\it et al. }\cite[Section 7.4]{HPRY}. These authors considered  the scale $\sqrt{\lambda}$ instead of $\lambda$. However, it is obvious that one may choose any scale of the form $h(\lambda)$, where $h$ is a nonnegative non-decreasing  function on $\R_+$.
There are many common random variables which satisfy (\ref{eq:MY}). For example, Gaussian, exponential, Gamma, Beta and Student random variables satisfy (\ref{eq:MY}).  There also exist random variables for which (\ref{eq:MY}) fails (see e.g. Hirsch {\it et al. }\cite[ Exercice 7.11]{HPRY}).
 
\subsubsection{Monotone transformations of IFR random variables}
Other examples of MRL processes are given in Hirsch {\it et al. }\cite[Section 7.4]{HPRY}. For instance, if $Y$ is a random variable which admits a positive continuous density and a log-concave survival function, then, assuming that $\E[e^{\lambda Y}]<\infty$ for every $\lambda\geq0$,
\begin{equation}\label{eq:ScaleMix2}
\left(\frac{e^{\lambda Y}}{\E[e^{\lambda Y}]}-1,\lambda\geq0\right)\text{ is a centered MRL process.}
\end{equation}
We shall exhibit two important classes of MRL processes. To present the first class, we introduce the set $\mathcal{H}$ of  functions
$\phi:\R_+\times\R\to\R$  such that: 
\begin{enumerate}
\item[(H1)]$\phi$ is continuous on $\R_+\times\R$, and of $\mathcal{C}^1$ class on $\R_+^{\ast}\times\R$,
\item[(H2)]for every $(\lambda,y)\in]0,+\infty[\times\R$,
\begin{equation}\label{eq:PartialPhiX}
 \frac{\partial \phi}{\partial y}(\lambda,y)>0,
\end{equation}
and for every $\lambda>0$,
\begin{equation}\label{eq:PartialPhiTX}
 y\,(\in\R)\longmapsto\widetilde{\phi}(\lambda,y):=\frac{\dfrac{\partial \phi}{\partial \lambda}(\lambda,y)}{\dfrac{\partial \phi}{\partial y}(\lambda,y)}\text{ is non-decreasing.}
\end{equation}
\item[(H3)]If, for every $\lambda>0$, $\tau_{+}(\lambda):=\lim\limits_{y\to+\infty}\phi(\lambda,y)$, resp. $\tau_{-}(\lambda)=\lim\limits_{y\to-\infty}\phi(\lambda,y)$ is the upper bound, resp. the lower bound of the interval $\phi(\lambda,\R)$, then
$$
\lambda\longmapsto\tau_{-}(\lambda)\text{ is non-increasing on }\R_+^{\ast}
$$
and either, for every $\lambda>0$, $\tau_{+}(\lambda)=+\infty$ or
$$
\lambda\longmapsto\tau_{+}(\lambda)\text{ is non-decreasing and of }\mathcal{C}^1\text{ class on }\R_+^{\ast}.
$$
\end{enumerate}
The following result, which generalizes (\ref{eq:ScaleMix2}), provides a large class of MRL peacocks.
\begin{theorem}\label{theo:PcocTailTP2a}
Let $\phi:\R_+\times\R\to\R$ belong to $\mathcal{H}$. 
Let $Y$ be a random variable such that
\begin{enumerate}
\item[i)]$Y$ admits a positive continuous density $f$, and 
\begin{equation}\label{eq:HypLogCncvTail}
\text{its survival function }m:z\longmapsto\Pb(Y\geq z)=\int_z^{+\infty}f(u)du\text{ is log-concave,}
\end{equation}
\item[ii)]for every $\eta>0$,
\begin{equation}\label{eq:PcocPhiSupInt}
\E[|\phi(\eta,Y)|]<\infty\,\text{ and }\,\E\left[\sup\limits_{0<\lambda\leq \eta}\left|\frac{\partial \phi}{\partial \lambda}(\lambda,Y)\right|\right]<\infty,
\end{equation}
\item[iii)]for every $\lambda\geq0$, $\E[\phi(\lambda,Y)]=0$.
\end{enumerate}
Then $(\phi(\lambda,Y),\lambda\geq0)$ is a centered MRL process.  
\end{theorem}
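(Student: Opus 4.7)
The plan is to invoke Theorem \ref{theo:IMRVTP2Tail}, which reduces the task to proving that $\lambda \longmapsto \Psi_{\mu_\lambda}(x)$ is non-decreasing for every $x \in \R$; the centering comes for free from iii). Fix $x \in \R$ and consider first $\lambda > 0$ with $x < \tau_+(\lambda)$. By (H1)--(H2), $\phi(\lambda, \cdot)$ is a $\mathcal{C}^1$ strictly increasing bijection onto its range, so there is a unique $a = a(\lambda) := \psi(\lambda, x)$ with $\phi(\lambda, a) = x$; implicit differentiation yields $a'(\lambda) = -\widetilde{\phi}(\lambda, a)$. Writing $\Psi_{\mu_\lambda}(x) = N(\lambda)/D(\lambda)$ with $N(\lambda) = \int_{a(\lambda)}^{+\infty}\phi(\lambda, y)f(y)\,dy$ and $D(\lambda) = m(a(\lambda))$, I would differentiate under the integral (legitimate by (\ref{eq:PcocPhiSupInt})) and apply the Fubini identity $\int_a^{+\infty}(\phi(\lambda,y)-x)f(y)\,dy = \int_a^{+\infty}\partial_z\phi(\lambda,z) m(z)\,dz$ to obtain that $D(\lambda)^2\partial_\lambda\Psi_{\mu_\lambda}(x)$ equals
$$
J := m(a)\int_a^{+\infty}\widetilde{\phi}(\lambda,y)\,\partial_y\phi(\lambda,y)\,f(y)\,dy - f(a)\,\widetilde{\phi}(\lambda,a)\int_a^{+\infty}\partial_y\phi(\lambda,y)\,m(y)\,dy.
$$
The whole matter reduces to showing $J \geq 0$.

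Set $\alpha(y) = \widetilde{\phi}(\lambda, y)$, $w(y) = \partial_y \phi(\lambda, y) > 0$ and $\overline{\alpha} = \alpha(a)$. If $\overline{\alpha} \geq 0$ I would split the integrand of $J$ as
$$
\alpha(y) m(a) f(y) - \overline{\alpha} f(a) m(y) = (\alpha(y) - \overline{\alpha}) m(a) f(y) + \overline{\alpha}\bigl[m(a) f(y) - f(a) m(y)\bigr].
$$
On $[a, +\infty)$ the first summand is non-negative because $\alpha$ is non-decreasing by (H2), and the second is non-negative because (\ref{eq:HypLogCncvTail}) is equivalent to the hazard rate $r = f/m$ being non-decreasing, which gives $m(a) f(y) \geq f(a) m(y)$ for $y \geq a$. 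Multiplying by $w > 0$ and integrating yields $J \geq 0$.

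The delicate case is $\overline{\alpha} < 0$, where the second summand above acquires a negative coefficient and the pointwise decomposition no longer concludes; here the global hypothesis iii) must intervene. Differentiating $0 = \E[\phi(\lambda, Y)]$ under the integral (again justified by (\ref{eq:PcocPhiSupInt})) gives $\int_{\R}\alpha(y) w(y) f(y)\,dy = 0$. Since $\alpha$ is non-decreasing and $\overline{\alpha} < 0$, we have $\alpha(y) \leq \overline{\alpha} < 0$ on $(-\infty, a]$, so $\int_{-\infty}^a \alpha w f\,dy \leq 0$ and consequently $\int_a^{+\infty}\alpha w f\,dy \geq 0$. The first term of $J$ is then $\geq 0$, while $-f(a)\overline{\alpha}\int_a^{+\infty} w m\,dy$ is also $\geq 0$ because $\overline{\alpha} < 0$; hence $J \geq 0$ in this case too.

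On the complement $\{\lambda \geq 0 : x \geq \tau_+(\lambda)\}$ one has $\Psi_{\mu_\lambda}(x) = x$ by definition, and the monotonicity of $\tau_+$ from (H3), combined with continuity of $\lambda \longmapsto \mu_\lambda$ at the boundary $\tau_+(\lambda)=x$ and at $\lambda=0$ (via (H1) and (\ref{eq:PcocPhiSupInt})), guarantees that these extensions preserve the monotonicity established above. This gives the MRL ordering of $(\phi(\lambda, Y), \lambda \geq 0)$. The main obstacle is exactly the case $\widetilde{\phi}(\lambda, a)<0$: the natural pointwise estimate of $J$ breaks down, and one is forced to appeal to the global vanishing-mean constraint to recover non-negativity.
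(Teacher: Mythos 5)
Your derivative identity is correct: after the substitution $z=\phi(\lambda,u)$ your $J$ is exactly the quantity $m^2(\phi^{-1}(\lambda,y))\,\partial_\lambda\Psi_\lambda(y)$ that the paper computes, so up to that point the two proofs coincide. Where you genuinely diverge is in showing $J\geq0$. The paper splits $J=K_1+K_2$ with $K_1=f(a)\int\bigl[\widetilde{\phi}(\lambda,\alpha_\lambda(z))-\widetilde{\phi}(\lambda,\alpha_\lambda(y))\bigr]m(\alpha_\lambda(z))\,dz\geq0$ by monotonicity of $\widetilde{\phi}$, and handles $K_2$ by a Chebyshev--rearrangement argument: the weight $\theta_y$ is nonnegative and non-decreasing by log-concavity of $m$, the function $\widetilde{\phi}(\lambda,\alpha_\lambda(\cdot))$ changes sign once, and the integral of $\widetilde{\phi}(\lambda,\cdot)f$ over the whole line vanishes by the centering, so $K_2\geq\theta_y(z_0)\cdot 0=0$. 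You instead split on the sign of $\overline{\alpha}=\widetilde{\phi}(\lambda,a)$: when $\overline{\alpha}\geq0$ a purely pointwise decomposition works (monotonicity of $\widetilde{\phi}$ plus the hazard-rate inequality $m(a)f(y)\geq f(a)m(y)$, no centering needed), and when $\overline{\alpha}<0$ the centering alone forces $\int_a^{+\infty}\widetilde{\phi}\,\partial_y\phi\,f\,dy\geq0$ while the second term of $J$ is automatically nonnegative (no log-concavity needed). Both arguments are correct; yours is more elementary in that it avoids the rearrangement inequality and isolates which hypothesis does the work in which regime, whereas the paper's $K_1+K_2$ decomposition treats all cases uniformly.

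Two small loose ends in your write-up. First, for $\lambda$ with $x<\tau_+(\lambda)$ the point $a(\lambda)$ with $\phi(\lambda,a)=x$ exists only if also $x>\tau_-(\lambda)$; when $x\leq\tau_-(\lambda)$ there is no such $a$, and one must instead observe that $\Psi_{\mu_\lambda}(x)=\E[\phi(\lambda,Y)]=0\leq\Psi_{\mu_{\lambda'}}(x)$ and use that $\tau_-$ is non-increasing so this regime is an initial interval in $\lambda$. Second, the gluing across $\{x\geq\tau_+(\lambda)\}$ is asserted rather than argued; the clean route (the paper's) is to fix $0<\lambda_1<\lambda_2$, note that $\tau_-(\lambda)\leq\tau_-(\lambda_1)$ and $\tau_+(\lambda_1)\leq\tau_+(\lambda)$ for $\lambda\in[\lambda_1,\lambda_2]$ so that the interior computation applies on the whole interval once $x\in\,]\tau_-(\lambda_1),\tau_+(\lambda_1)[$, and to dispose of the two boundary regimes by $\Psi_{\lambda_1}(x)=0\leq\Psi_{\lambda_2}(x)$ and $\Psi_{\lambda_1}(x)=x\leq\Psi_{\lambda_2}(x)$ respectively. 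These are routine repairs and do not affect the substance of your argument.
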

Here are some cases where Theorem \ref{theo:PcocTailTP2a} applies.
\begin{exa} Let $Y$ be a random variable satisfying Condition i) of Theorem \ref{theo:PcocTailTP2a} and let $\varphi:\R\to\R$ be a $\mathcal{C}^1$-class function such that $\varphi'>0$.
\item[1)]Suppose that:
$$
\forall\,\lambda>0,\,\text{ }\E\left[\sup\limits_{0<\eta\leq \lambda}|Y|\varphi'(\eta Y)\right]<\infty\,\text{  and  }\,\E[Y\varphi'(\lambda Y)]=0.
$$
Then 
$$
(\phi(\lambda,Y):=\varphi(\lambda Y)-\E[\varphi(\lambda Y)],\lambda\geq0)
$$
fulfills conditions of Theorem \ref{theo:PcocTailTP2a} with
$$
\tau_{-}(\lambda)=\tau_{-}=\lim\limits_{z\to-\infty}\varphi(z)-\varphi(0)\,\text{  and  }\,\tau_{+}(\lambda)=\tau_+=\lim\limits_{z\to+\infty}\varphi(z)-\varphi(0).
$$
\item[2)]We suppose that 
$$
\lim\limits_{y\to-\infty}\varphi(z)=l\,(l\in[-\infty,+\infty))\,\text{ and }\,\lim\limits_{z\to+\infty}\varphi(z)=+\infty.
$$
Observe that if
\begin{equation}\label{eq:exbPcocSupIntb}
\forall\,\lambda>0,\,\text{ }\E\left[\sup\limits_{0<\eta\leq \lambda}|Y|\varphi'(\eta Y)\right]<\infty,
\end{equation} 
then it follows from (\ref{eq:exbPcocSupIntb}) that $h:\lambda\longmapsto\E[\varphi(\lambda Y)]$ is differentiable on $\R_+^{\ast}$. We suppose moreover that $h$ is non-decreasing.
\begin{enumerate}
\item[i)]If $\varphi$ is convex, then
$$
y\longmapsto y-\frac{h'(\lambda)}{\varphi'(\lambda y)}\,\text{ is non-decreasing.}
$$
Therefore, Theorem \ref{theo:PcocTailTP2a} applies to
$$
(\phi(\lambda,Y):=\varphi(\lambda Y)-h(\lambda),\lambda\geq0)
$$
with
$$
\tau_{-}(\lambda)=l-h(\lambda)\,\text{ and }\,\tau_{+}=+\infty.
$$
In particular, if, for every $\lambda>0$, $\E[e^{\lambda Y}]<\infty$, and if $h:\lambda\longmapsto\E[e^{\lambda Y}]$ is non-decreasing, then
Theorem \ref{theo:PcocTailTP2a} applies to
$$
(\phi(\lambda,Y):=e^{\lambda Y}-h(\lambda),\lambda\geq0)
$$
with $\tau_{-}(\lambda)=-h(\lambda)$ and $\tau_{+}=+\infty$.\\
Note that, for any random variable $Z$ such that $\E[e^{\lambda Z}]<\infty$ for every $\lambda>0$, $(\phi(\lambda,Z):=e^{\lambda Z}-h(\lambda),\lambda\geq0)$ is a peacock if and only if $\lambda\longmapsto \E[e^{\lambda Z}]$ is non-decreasing (see \cite[Example 8]{BPR2}). 
\item[ii)]If $\varphi$ is log-convex, 
$$
y\longmapsto y-\frac{h'(\lambda)}{h(\lambda)}\frac{\varphi(\lambda y)}{\varphi'(\lambda y)}
\text{ is non-decreasing,}
$$
and, as a consequence, Theorem \ref{theo:PcocTailTP2a} applies to 
$$
\left(\phi(\lambda,Y):=\frac{\varphi(\lambda Y)}{h(\lambda)}-1,\lambda\geq0\right),
$$
with
$$
\tau_{-}(\lambda)=\frac{l}{h(\lambda)}-1\text{ }(\lambda>0)\text{ and }\tau_{+}(\lambda)=+\infty.
$$
In particular, if $\E\left[e^{\lambda Y}\right]<\infty$ for every $\lambda>0$, then
$$
\left(\phi(\lambda,Y):=\frac{e^{\lambda Y}}{\E\left[e^{\lambda Y}\right]}-1,\lambda\geq0\right)
$$
satisfies conditions of Theorem \ref{theo:PcocTailTP2a} with $\tau_{-}(\lambda)=\tau_{-}=-1$ and $\tau_{+}=+\infty$.
\end{enumerate}
\item[3)]Let $\varphi$ be concave and such that 
$$
\lim\limits_{y\to-\infty}\varphi(y)=-\infty\text{ and }\lim\limits_{y\to+\infty}\varphi(y)=l\text{ }(l\in (-\infty,+\infty])
$$
We suppose that $\varphi$ satisfies the following integrability hypothesis:
\begin{equation}\label{eq:ExaPcocTailTP2}
\forall\,\lambda\in\R_+,\text{ }\E[|\varphi(Y-\lambda)|]<+\infty\text{ and }\E[\varphi'(Y-\lambda)]\}<+\infty.
\end{equation}
We deduce from (\ref{eq:ExaPcocTailTP2}) that $h:\lambda\longmapsto\E[\varphi(Y-\lambda)]$ is a (non-increasing) $\mathcal{C}^1$-class function on $\R_+$. 
Since, for every $\lambda\in\R_+$,
$$
y\longmapsto-1-\frac{h'(\lambda)}{\varphi'(y-\lambda)}\text{ is non-decreasing,}
$$
then Theorem \ref{theo:PcocTailTP2a} applies to
$$
\left(\phi(\lambda,Y):=\varphi(Y-\lambda)-h(\lambda),\lambda\geq0\right),
$$
where $\tau_{-}(\lambda)=\tau_{-}=-\infty$ and $\tau_{+}(\lambda)=l-h(\lambda)$. 
 
\end{exa}
To prove Theorem \ref{theo:PcocTailTP2a}, we need  a convenient expression of the Hardy-Littlewood function.
\begin{lemma}\label{lem:PsimuExpress}
Let $\lambda>0$ be fixed. Let $\mu_{\lambda}$ be the law of $\phi(\lambda,Y)$, and let $\Psi_{\lambda}$ denote the Hardy-Littlewood function of $\mu_{\lambda}$. For every $z\in\R$, let $\phi^{-1}(\lambda,z)$ be the unique element in $\R$ such that
$$
\phi(\lambda,\phi^{-1}(\lambda,z))=z.
$$
Then, for every $y\in\R$, 
\begin{align*}
\Psi_{\lambda}(y)=&\left(y+\frac{1}{m(\phi^{-1}(\lambda,y))}\int_y^{\tau_{+}(\lambda)}m(\phi^{-1}(\lambda,z))dz\right)1_{]\tau_{-}(\lambda),\tau_{+}(\lambda)[}(y)
+y1_{[\tau_{+}(\lambda),+\infty[}(y),
\end{align*}
where $m$ denotes the survival function of $Y$.
\end{lemma}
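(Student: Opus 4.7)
The plan is to unpack the definition of $\Psi_\lambda$ using the structural information on $\phi$ carried by the hypotheses $(H1)$--$(H3)$, then reduce the computation to an application of Fubini's theorem.

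First I would establish that $\phi(\lambda,\cdot):\R\to\,]\tau_{-}(\lambda),\tau_{+}(\lambda)[$ is a continuous, strictly increasing bijection. Continuity comes from $(H1)$; strict monotonicity is exactly $\partial\phi/\partial y>0$ from $(H2)$; and the image is the open interval $\,]\tau_{-}(\lambda),\tau_{+}(\lambda)[$ by the definition of $\tau_{\pm}(\lambda)$ in $(H3)$ together with the intermediate value theorem. In particular, $\phi^{-1}(\lambda,\cdot)$ is well-defined on this open interval, and the law $\mu_{\lambda}$ of $\phi(\lambda,Y)$ is supported in $\,]\tau_{-}(\lambda),\tau_{+}(\lambda)[$, so that $b_{\mu_{\lambda}}=\tau_{+}(\lambda)$.

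Second, I would handle the two regimes separately. For $y\geq\tau_{+}(\lambda)$ the result is immediate since $\Psi_{\lambda}(y)=y$ by the very definition of the Hardy-Littlewood function at points $\geq b_{\mu_{\lambda}}$. For $y\in\,]\tau_{-}(\lambda),\tau_{+}(\lambda)[$, the strict increase of $\phi(\lambda,\cdot)$ and the positivity of the density $f$ (hypothesis~i)) give
$$
\mu_{\lambda}([y,+\infty[)=\Pb(\phi(\lambda,Y)\geq y)=\Pb(Y\geq\phi^{-1}(\lambda,y))=m(\phi^{-1}(\lambda,y))>0.
$$

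Third, by the classical Fubini rewriting $(z-y)^{+}=\int_{y}^{+\infty}\mathbf{1}_{\{u\leq z\}}\,du$, I obtain
$$
\int_{[y,+\infty[}(z-y)\,\mu_{\lambda}(dz)=\int_{y}^{+\infty}\mu_{\lambda}([u,+\infty[)\,du=\int_{y}^{\tau_{+}(\lambda)}m(\phi^{-1}(\lambda,u))\,du,
$$
the last equality because $\mu_{\lambda}([u,+\infty[)=0$ for $u\geq\tau_{+}(\lambda)$. Dividing by $m(\phi^{-1}(\lambda,y))$ and adding $y$ yields the claimed expression on $\,]\tau_{-}(\lambda),\tau_{+}(\lambda)[$. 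There is no real difficulty here: everything reduces to the bijection property of $\phi(\lambda,\cdot)$, so the only thing one must be careful about is the identification of the upper endpoint of the support of $\mu_{\lambda}$ with $\tau_{+}(\lambda)$, in particular when $\tau_{+}(\lambda)=+\infty$ (in which case the interval of integration is unbounded but the integral is still finite by the integrability hypothesis~ii)).
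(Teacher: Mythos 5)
Your treatment of the two regimes you do consider is correct and is essentially the paper's own argument: identify $\phi(\lambda,\cdot)$ as an increasing bijection onto $]\tau_{-}(\lambda),\tau_{+}(\lambda)[$, compute $\mu_{\lambda}([y,+\infty[)=m(\phi^{-1}(\lambda,y))$, and rewrite the partial first moment by Tonelli. The identification $b_{\mu_{\lambda}}=\tau_{+}(\lambda)$ that you flag is indeed the point to check for $y\geq\tau_{+}(\lambda)$, and it holds because $m>0$ everywhere (positivity of $f$).

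However, there is a missing case: the lemma asserts the formula for \emph{every} $y\in\R$, and for $y\leq\tau_{-}(\lambda)$ both indicators on the right-hand side vanish, so the claim there is $\Psi_{\lambda}(y)=0$. This regime is nonempty whenever $\tau_{-}(\lambda)>-\infty$ (e.g.\ $\phi(\lambda,y)=e^{\lambda y}/\E[e^{\lambda Y}]-1$ gives $\tau_{-}=-1$), and it is not automatic: for $y\leq\tau_{-}(\lambda)$ one has $\mu_{\lambda}([y,+\infty[)=1$, hence
$$
\Psi_{\lambda}(y)=y+\int_{[y,+\infty[}(z-y)\,\mu_{\lambda}(dz)=\E[\phi(\lambda,Y)],
$$
which equals $0$ only because of the centering hypothesis iii) of Theorem \ref{theo:PcocTailTP2a}, $\E[\phi(\lambda,Y)]=0$ --- a hypothesis your argument never invokes. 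This is exactly how the paper opens its proof. You should add this third case; with it, your proof is complete and coincides with the paper's.
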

\begin{proof}
Let $\lambda>0$ be fixed. Since 
$$
\int_{\tau_{-}(\lambda)}^{\tau_{+}(\lambda)}zd\mu_{\lambda}(z)=\E[\phi(\lambda,Y)]=0,
$$
then, for every  $y\in]-\infty,\tau_{-}(\lambda)]$, $\Psi_{\lambda}(y)=0$. Moreover,
if $y\in[\tau_{+}(\lambda),+\infty[$, then, by definition of $\Psi_{\lambda}$, $\Psi_{\lambda}(y)=y$. Suppose that $y\in]\tau_{-}(\lambda),\tau_{+}(\lambda)[$. Then, by definition of $\Psi_{\lambda}$,
$$
\Psi_{\lambda}(y)=y+\frac{1}{\mu_{\lambda}([y,\tau_{+}(\lambda)[)}\int_{[y,\tau_{+}(\lambda)[}(a-y)\mu_{\lambda}(da).
$$
But, 
$$
\mu_{\lambda}([y,\tau_{+}(\lambda)[)=\Pb(\phi(\lambda,Y)\geq y)=\Pb(Y\geq\phi^{-1}(\lambda,y))=m(\phi^{-1}(\lambda,y))
$$
and, by Tonelli's theorem,
$$
\int_{[y,\tau_{+}(\lambda)[}(a-y)\mu_{\lambda}(da)=\int_y^{\tau_{+}(\lambda)}\mu([z,\tau_{+}(\lambda)[)dz=\int_y^{\tau_{+}(\lambda)}m(\phi^{-1}(\lambda,z))dz.
$$
This completes the proof.
\end{proof}
\begin{proof}[Proof of Theorem \ref{theo:PcocTailTP2a}]
Let $0<\lambda_1<\lambda_2$ and $y\in\R$ be fixed. We shall prove that
\begin{equation}\label{eq:IMRVpExa}
\forall\,0<\lambda_1<\lambda_2,\,\forall\,y\in\R,\,\Psi_{\lambda_1}(y)\leq\Psi_{\lambda_2}(y).
\end{equation}
We deduce from (\ref{eq:PcocPhiSupInt}) and Hypothesis iii) that 
$$
0=\frac{\partial }{\partial \lambda}\E[\phi(\lambda,Y)]=\E\left[\frac{\partial \phi}{\partial \lambda}(\lambda,Y)\right]=\int_{-\infty}^{+\infty}\frac{\partial \phi}{\partial \lambda}(\lambda,a)f(a)da.
$$
Moreover, by Lemma \ref{lem:PsimuExpress},
\begin{align*}
\Psi_{\lambda}(y)=
\left(y+\frac{1}{m(\phi^{-1}(\lambda,y))}\int_y^{\tau_{+}(\lambda)}m(\phi^{-1}(\lambda,z))dz\right)1_{]\tau_{-}(\lambda),\tau_{+}(\lambda)[}(y)+y1_{[\tau_{+}(\lambda),+\infty[}(y).
\end{align*}
Since $\lambda\longmapsto\tau_{-}(\lambda)$ is non-increasing and $\lambda\longmapsto\tau_{+}(\lambda)$ is non-decreasing, we have $\tau_{-}(\lambda_2)\leq\tau_{-}(\lambda_1)$ and $\tau_{+}(\lambda_1)\leq\tau_{+}(\lambda_2)$.\\
Let us write  
$$
\R=]-\infty,\tau_{-}(\lambda_1)]\cup]\tau_{-}(\lambda_1),\tau_{+}(\lambda_1)[\cup[\tau_{+}(\lambda_1),+\infty[.
$$
If $y\in]-\infty,\tau_{-}(\lambda_1)]$, then $\Psi_{\lambda_1}(y)=0\leq\Psi_{\lambda_2}(y)$, and if $y\in[\tau_{+}(\lambda_1),+\infty[$, then $\Psi_{\lambda_1}(y)=y\leq\Psi_{\lambda_2}(y)$.   Now, Fix  $y\in]\tau_{-}(\lambda_1),\tau_{+}(\lambda_1)[$, and consider the function 
$$
\lambda\in(\lambda_1,\lambda_2)\longmapsto\Psi_{\lambda}(y)=y+\frac{1}{m(\phi^{-1}(\lambda,y))}\int_y^{\tau_{+}(\lambda)}m(\phi^{-1}(\lambda,z))dz,
$$
where, for every $z\in[y,+\infty[$, $\phi^{-1}(\lambda,z)$ denotes the unique element in $\R$ such that
\begin{equation}\label{eq:phiTXInverse}
\phi(\lambda,\phi^{-1}(\lambda,z))=z.
\end{equation}
Note that, since $\tau_{-}$, resp. $\tau_{+}$ is non-increasing, resp. non-decreasing, 
$$
\tau_{-}(\lambda)\leq\tau_{-}(\lambda_1)\leq y\leq\tau_{+}(\lambda_1)\leq\tau_{+}(\lambda).
$$
If we differentiate (\ref{eq:phiTXInverse}) with respect to $\lambda$, we obtain:
\begin{equation}\label{eq:phiDiffInverse}
\frac{\partial \phi^{-1}}{\partial \lambda}(\lambda,z)=-\frac{\dfrac{\partial \phi}{\partial \lambda}(\lambda,\phi^{-1}(\lambda,z))}{\dfrac{\partial \phi}{\partial y}(\lambda,\phi^{-1}(\lambda,z))}=-\widetilde{\phi}(\lambda,\phi^{-1}(\lambda,z)).
\end{equation}
Observe that, by Condition i), $m$ is of $\mathcal{C}^1$ class and denote by $m'$ the derivative of $m$.
Since $\tau_{+}$ is of $\mathcal{C}^1$ class on $\R_+^{\ast}$ and since $$m(\phi^{-1}(\lambda,\tau_{+}(\lambda)))=\Pb(\phi(\lambda,Y)\geq\tau_{+}(\lambda))=0,$$ the Leibniz' rule for differentiation under the integral sign yields that
\begin{align*}
&m^2(\phi^{-1}(\lambda,y))\frac{\partial \Psi_{\lambda}(y)}{\partial \lambda}\\
&=m^2(\phi^{-1}(\lambda,y))\frac{\partial }{\partial \lambda}
\left(y+\frac{1}{m(\phi^{-1}(\lambda,y))}\int_y^{\tau_{+}(\lambda)}m(\phi^{-1}(\lambda,z))dz\right)\\
&=m(\phi^{-1}(\lambda,y))\left(\int_y^{\tau_{+}(\lambda)}\frac{\partial \phi^{-1}}{\partial \lambda}(\lambda,z)m'(\phi^{-1}(\lambda,z))dz+m(\phi^{-1}(\lambda,\tau_{+}(\lambda)))\tau'_{+}(\lambda)\right)\\
&\qquad-\frac{\partial \phi^{-1}}{\partial \lambda}(\lambda,y)m'(\phi^{-1}(\lambda,y))\int_y^{\tau_{+}(\lambda)}m(\phi^{-1}(\lambda,z))dz\\
&=m(\phi^{-1}(\lambda,y))\int_y^{\tau_{+}(\lambda)}\widetilde{\phi}(\lambda,\phi^{-1}(\lambda,z))f(\phi^{-1}(\lambda,z))dz\\
&\qquad-f(\phi^{-1}(\lambda,y))\int_y^{\tau_{+}(\lambda)}\widetilde{\phi}(\lambda,\phi^{-1}(\lambda,y))m(\phi^{-1}(\lambda,z))dz 
\end{align*}
which is equivalent to:
\begin{align*}
&m^2(\alpha_{\lambda}(y))\frac{\partial \Psi_{\lambda}(y)}{\partial \lambda}\\
&=f(\alpha_{\lambda}(y))\int_y^{\tau_{+}(\lambda)}\left[\widetilde{\phi}(\lambda,\alpha_{\lambda}(z))-\widetilde{\phi}(\lambda,\alpha_{\lambda}(y))\right]m(\alpha_{\lambda}(z))dz\\
&\quad+f(\alpha_{\lambda}(y))\int_y^{\tau_{+}(\lambda)}\left(\dfrac{m(\alpha_{\lambda}(y))}{f(\alpha_{\lambda}(y))}-\dfrac{m(\alpha_{\lambda}(z))}{f(\alpha_{\lambda}(z))}\right)\widetilde{\phi}(\lambda,\alpha_{\lambda}(z))f(\alpha_{\lambda}(z))dz,
\end{align*}
where, for every $z\in[y,\tau_{+}(\lambda)[$, $\alpha_{\lambda}(z)=\phi^{-1}(\lambda,z)$.\\
We set
$$
K_1:= \int_y^{\tau_{+}(\lambda)}\left[\widetilde{\phi}(\lambda,\alpha_{\lambda}(z))-\widetilde{\phi}(\lambda,\alpha_{\lambda}(y))\right]m(\alpha_{\lambda}(z))dz
$$
and
$$
K_2:= \int_{\tau_{-}(\lambda)}^{\tau_{+}(\lambda)}1_{[y,\tau_{+}(\lambda)[}(z)\left(\dfrac{m(\alpha_{\lambda}(y))}{f(\alpha_{\lambda}(y))}-\dfrac{m(\alpha_{\lambda}(z))}{f(\alpha_{\lambda}(z))}\right)\widetilde{\phi}(\lambda,\alpha_{\lambda}(z))f(\alpha_{\lambda}(z))dz.
$$
Since $a\in]\tau_{-}(\lambda),\tau_{+}(\lambda)[\longmapsto\phi^{-1}(\lambda,a)$ is non-decreasing and since, by hypothesis (H2), $a\longmapsto\widetilde{\phi}(\lambda,a)$ is non-decreasing, we have $K_1\geq0$. Moreover, $m$ being log-concave, 
$$
\theta_y:z\in]\tau_{-}(\lambda),\tau_{+}(\lambda)[\longmapsto1_{[y,\tau_{+}(\lambda)[}(z)\left(\dfrac{m(\alpha_{\lambda}(y))}{f(\alpha_{\lambda}(y))}-\dfrac{m(\alpha_{\lambda}(z))}{f(\alpha_{\lambda}(z))}\right)
$$
is nonnegative and non-decreasing. On the other hand, $a\longmapsto\widetilde{\phi}(\lambda,a)$ is non-decreasing. Then, denoting by $\widetilde{\phi}^{-1}_{\lambda}$ the right-continuous inverse of $a\longmapsto\widetilde{\phi}(\lambda,a)$,
\begin{align*}
K_2&=\int_{\tau_{-}(\lambda)}^{\tau_{+}(\lambda)}\theta_y(z)\widetilde{\phi}(\lambda,\alpha_{\lambda}(z))f(\alpha_{\lambda}(z))dz\\
&=\int_{\tau_{-}(\lambda)}^{\tau_{+}(\lambda)}\theta_y(z)\widetilde{\phi}(\lambda,\phi^{-1}(\lambda,z))f(\phi^{-1}(\lambda,z))dz\\
&\geq\theta_y\left(\phi(\lambda,\widetilde{\phi}_{\lambda}^{-1}(0))\right) \int_{\tau_{-}(\lambda)}^{\tau_{+}(\lambda)}\widetilde{\phi}(\lambda,\phi^{-1}(\lambda,z))f(\phi^{-1}(\lambda,z))dz\\
&=\theta_y\left(\phi(\lambda,\widetilde{\phi}_{\lambda}^{-1}(0))\right) \int_{-\infty}^{+\infty}\frac{\partial \phi}{\partial \lambda}(\lambda,a)f(a)da=0\\
&\quad\text{(after the change of variable }a=\phi^{-1}(\lambda,z))
\end{align*}
which ends the proof of (\ref{eq:IMRVpExa}).
\end{proof}

\begin{rem}
\item[1)]Theorem \ref{theo:PcocTailTP2a} extends to functions which are positive on an interval $]l,r[$, and which vanish on $]l,r[^{c}$.
\item[2)]If the density $f$ is log-concave, then so is $$m:x\longmapsto\displaystyle\int_x^{+\infty}f(u)du,$$ and, using the same notations as in Theorem \ref{theo:PcocTailTP2a}, $(\phi(\lambda,Y),\lambda\geq0)$ is a peacock which increases in the MRL order. Indeed, it suffices to see that the function $(y,z)\longmapsto m(y-z)$ is TP$_2$. But, 
$$
m(y-z)=\int_{y}^{+\infty}f(a-z)\,da=\int_{-\infty}^{+\infty}1_{[y,+\infty[}(a)f(a-z)\,da,
$$
and the functions $(y,z)\longmapsto 1_{[y,+\infty[}(z)$ and $(y,z)\longmapsto f(y-z)$ are TP$_2$. Then the map $(y,z)\longmapsto m(y-z)$ is TP$_2$ as a convolution product of two TP$_2$ functions (see e.g. Proposition \ref{prop:TPCompForml}). 
\end{rem}  
The second family of MRL processes resembles the previous one. As in Theorem \ref{theo:PcocTailTP2a}, the random variables having a log-concave survival function play an essential role.
\begin{theorem}\label{theo:ExaIMRVprocess}
Let $\varphi:\R_+\to\R_+$ be a concave function such that $\varphi(0)=0$. Assume that $\varphi$ is of  $\mathcal{C}^1$ class and its derivative $\varphi'$ is positive. Let $Y$ be a random variable satisfying Condition i) of Theorem \ref{theo:PcocTailTP2a} such that $\E[\varphi(|Y|)]<\infty$. 
Then, for every non-decreasing $\mathcal{C}^1$-class function $g:\R_+\to\R$,
$$
\left(Y_{\lambda}:=\frac{\varphi((Y-g(\lambda))^+)}{\E[\varphi((Y-g(\lambda))^+)]},\lambda\geq0\right)\text{ is a MRL process.}
$$
\end{theorem}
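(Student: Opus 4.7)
The plan is to mimic the proof of Theorem~\ref{theo:PcocTailTP2a} with the function
$$
\phi(\lambda, y) := \frac{\varphi((y - g(\lambda))^+)}{h(\lambda)} - 1, \qquad h(\lambda) := \E\bigl[\varphi((Y - g(\lambda))^+)\bigr],
$$
so that $\phi(\lambda, Y) = Y_{\lambda} - 1$ is centered and bounded below by $-1$. Although $\phi$ is globally continuous, it has a kink at $y = g(\lambda)$ and $\partial_{y}\phi$ vanishes on $\{y < g(\lambda)\}$, so the global form of (H1)--(H2) fails. The key observation is that the proof of Theorem~\ref{theo:PcocTailTP2a} only uses these hypotheses on the region $\{y > g(\lambda)\}$, where $\phi$ is of $\mathcal{C}^{1}$ class and strictly increasing in $y$; this is the extension to functions vanishing outside an interval alluded to in the first remark following that proof.

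First I would pin down the endpoints of the support. Direct differentiation of $h(\lambda) = \int_{g(\lambda)}^{+\infty}\varphi(u - g(\lambda))\,f(u)\,du$ yields $h'(\lambda) = -g'(\lambda)\,J_{0}(\lambda)$ with $J_{0}(\lambda) := \int_{g(\lambda)}^{+\infty}\varphi'(u - g(\lambda))\,f(u)\,du > 0$, so $h$ is non-increasing. Setting $L := \lim_{u \to +\infty}\varphi(u) \in (0, +\infty]$, the support bounds of $\phi(\lambda, \cdot)$ are $\tau_{-}(\lambda) \equiv -1$ and $\tau_{+}(\lambda) := L/h(\lambda) - 1$ (non-decreasing, possibly identically $+\infty$), which is the analogue of (H3). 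The atom of mass $1 - m(g(\lambda))$ at $-1$ gives $\Psi_{\mu_{\lambda}}(y) = 0$ for $y \leq -1$ and $\Psi_{\mu_{\lambda}}(y) = y$ for $y \geq \tau_{+}(\lambda)$, so monotonicity in $\lambda$ is trivial off $\,]{-1}, \tau_{+}(\lambda)[\,$; at the transition $y \in [\tau_{+}(\lambda_{1}), \tau_{+}(\lambda_{2})]$ it holds because the interior formula always produces $\Psi_{\mu_{\lambda_{2}}}(y) \geq y = \Psi_{\mu_{\lambda_{1}}}(y)$.

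Next I would verify the only remaining hypothesis, namely the monotonicity in $y$ of $\widetilde{\phi} := (\partial_{\lambda}\phi)/(\partial_{y}\phi)$ on $\{y > g(\lambda)\}$. A short computation using $h' = -g'\,J_{0}$ gives
$$
\widetilde{\phi}(\lambda, y) = g'(\lambda)\left[-1 + \frac{J_{0}(\lambda)\,\varphi(y - g(\lambda))}{h(\lambda)\,\varphi'(y - g(\lambda))}\right],
$$
so $y \mapsto \widetilde{\phi}(\lambda, y)$ is non-decreasing iff $v \mapsto \varphi(v)/\varphi'(v)$ is non-decreasing on $\R_{+}^{\ast}$. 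This is automatic: $\varphi$ is non-decreasing and nonnegative (since $\varphi(0) = 0$ and $\varphi' > 0$), while $1/\varphi'$ is non-decreasing (since $\varphi'$ is non-increasing by concavity of $\varphi$), so the product $\varphi \cdot (1/\varphi')$ is non-decreasing.

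With these ingredients the argument of Theorem~\ref{theo:PcocTailTP2a} transports verbatim. On $\,]{-1}, \tau_{+}(\lambda)[\,$ one uses the analogue of Lemma~\ref{lem:PsimuExpress} with $\beta(\lambda, z) := g(\lambda) + \varphi^{-1}(h(\lambda)(z+1))$ (which lies in the smooth region $\{y > g(\lambda)\}$ for every $z > -1$) to write
$$
\Psi_{\mu_{\lambda}}(y) = y + \frac{1}{m(\beta(\lambda, y))}\int_{y}^{\tau_{+}(\lambda)} m(\beta(\lambda, z))\, dz,
$$
and then differentiates in $\lambda$ to decompose $m(\beta)^{2}\,\partial_{\lambda}\Psi_{\mu_{\lambda}}(y) = f(\beta)(K_{1} + K_{2})$. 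The term $K_{1} \geq 0$ is the integral of differences of $\widetilde{\phi}$ along the increasing map $z \mapsto \beta(\lambda, z)$, while $K_{2} \geq 0$ follows from the Chebyshev-type trick at the zero of $z \mapsto \widetilde{\phi}(\lambda, \beta(\lambda, z))$, combined with the log-concavity of $m$ and the centering identity $\E[\partial_{\lambda}\phi(\lambda, Y)] = 0$ (itself equivalent to $\partial_{\lambda}\E[Y_{\lambda}] = 0$). The main obstacle I expect is the careful bookkeeping that every Leibniz differentiation and change of variable in the original proof really does stay inside the smooth branch $\{y > g(\lambda)\}$, so that the kink at $y = g(\lambda)$ and the atom at $-1$ play no role; once this is checked, no new idea is needed.
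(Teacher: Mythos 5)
Your proposal is correct and follows essentially the same route as the paper: the paper's own proof also mimics Theorem \ref{theo:PcocTailTP2a}, writing $\Psi_{\lambda}$ explicitly through $m(\rho(\lambda,z))$ with $\rho(\lambda,z)=\varphi^{-1}(h(\lambda)z)+g(\lambda)$ (your $\beta$ up to the harmless recentering by $-1$), checking that $h$ is non-increasing and that $z\longmapsto\partial_{\lambda}\rho(\lambda,z)$ is monotone (your reduction to $\varphi/\varphi'$ non-decreasing), and then concluding via the same $K_1+K_2$ decomposition using the log-concavity of $m$ and the centering identity.
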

Here are some examples where Theorem \ref{theo:ExaIMRVprocess} applies.
\begin{exa}
Let $Y$ be a random variable which enjoys Condition i) of Theorem \ref{theo:PcocTailTP2a}.  
\item[1)]If $Y$ is integrable, and if $g:\R_+\to\R$ is a non-decreasing $\mathcal{C}^1$-class function, then 
$$
\left(\frac{(Y-g(\lambda))^+}{\E[(Y-g(\lambda))^+]},\lambda\geq0\right)
$$
satisfies the conditions of Theorem \ref{theo:ExaIMRVprocess}.
\item[2)]For every non-decreasing $\mathcal{C}^1$-class function $g:\R_+\to\R$,
$$
\left(\frac{Arctan((Y-g(\lambda))^+)}{\E[Arctan((Y-g(\lambda))^+)]},\lambda\geq0\right)
$$
enjoys the conditions of Theorem \ref{theo:ExaIMRVprocess}.
\end{exa}
\begin{proof}[Proof of Theorem \ref{theo:ExaIMRVprocess}]
Let $h:\R_+\to\R_+$ be the function given by:
$$
\forall\,\lambda\geq0,\,h(\lambda)=\E[\varphi((Y-g(\lambda))^+)].
$$
Then, for every $\lambda\geq0$, 
\begin{equation}\label{eq:MeanIMRV0}
h(\lambda)=\int_0^{\tau}\Pb(\varphi((Y-g(\lambda))^+)\geq y)dy=\int_{0}^{\tau}m(\varphi^{-1}(y)+g(\lambda))dy,
\end{equation}
where $\tau=\lim\limits_{y\to+\infty}\varphi(y)\in]0,+\infty]$.
Observe that $h$ is a non-increasing $\mathcal{C}^1$-class function. Indeed, the tail $m$ of $Y$ is a non-increasing $\mathcal{C}^1$-class function and $g$ is a non-decreasing $\mathcal{C}^1$-class function. Moreover, we deduce from (\ref{eq:MeanIMRV0}) that
\begin{equation}\label{eq:MeanIMRV01}
1=\int_0^{\tau/h(\lambda)}m(\rho(\lambda,z))dz,
\end{equation}
with $\rho(\lambda,z)=\varphi^{-1}(h(\lambda)z)+g(\lambda)$. If we differentiate  (\ref{eq:MeanIMRV01}) with respect to $\lambda$, we obtain:
\begin{equation}\label{eq:MeanIMRV02}
0=\int_0^{\tau/h(\lambda)}\left[-\frac{\partial \rho}{\partial \lambda}(\lambda,z)\right]f(\rho(\lambda,z))dz,
\end{equation}
where, for every fixed $\lambda\in\R_+$, the function $\dfrac{\partial \rho}{\partial \lambda}(\lambda,\cdot)$ defined by
$$
\forall\,z\in(0,\tau/h(\lambda)),\text{ }\frac{\partial \rho}{\partial \lambda}(\lambda,z)=h'(\lambda)\frac{z}{\varphi'(\varphi^{-1}(h(\lambda)z))}+g'(\lambda)
$$
is non-increasing in $z$ for every fixed $\lambda$ (since $h'(\lambda)\leq0$ and $\varphi'$ is positive and non-increasing).\\
Now, for every $\lambda\geq0$, let $\Psi_{\lambda}$ denote the Hardy-Littlewood function attached to the law of $Y_{\lambda}$. It is not difficult to verify the following relation:
\begin{align*}
&\forall(\lambda,y)\in\R_+\times\R_+,\\
&\Psi_{\lambda}(y)=\left\{
\begin{array}{ll}
y&\text{if }y\geq\frac{\tau}{h(\lambda)},\\
y+\dfrac{1}{m(\rho(\lambda,y))}\displaystyle\int_y^{\tau/h(\lambda)}m(\rho(\lambda,z))dz&\text{if }0<y<\frac{\tau}{h(\lambda)},\\
1&\text{if }y=0.
\end{array}
\right.
\end{align*}
Let $0\leq\lambda_1<\lambda_2$ and $y\geq0$. We wish to prove that $\Psi_{\lambda_1}(y)\leq\Psi_{\lambda_2}(y)$. Note that, since $h$ is non-increasing, one has:
$$
\frac{\tau}{h(\lambda_1)}\leq\frac{\tau}{h(\lambda)}\leq\frac{\tau}{h(\lambda_2)}.
$$
We have $\Psi_{\lambda_1}(0)=1=\Psi_{\lambda_2}(0)$. If $y\geq\tau/h(\lambda_1)$, then $\Psi_{\lambda_1}(y)=y\leq\Psi_{\lambda_2}(y)$. Otherwise,  for every $(\lambda,y)\in(\lambda_1,\lambda_2)\times(0,\tau/h(\lambda_1))$,
\begin{align*}
&m^2(\rho(\lambda,y))\frac{\partial \Psi_{\lambda}(y)}{\partial \lambda}\\
&=m(\rho(\lambda,y))\int_y^{\tau/h(\lambda)}\left(\frac{\partial \rho}{\partial \lambda}(\lambda,y)-\frac{\partial \rho}{\partial \lambda}(\lambda,z)\right)f(\rho(\lambda,z))dz\\
&\quad+f(\rho(\lambda,y))\int_y^{\tau/h(\lambda)}\left(\frac{m(\rho(\lambda,y))}{f(\rho(\lambda,y))}-\frac{m(\rho(\lambda,z))}{f(\rho(\lambda,z))}\right)\left[-\frac{\partial \rho}{\partial \lambda}(\lambda,z)\right]f(\rho(\lambda,z))dz.
\end{align*} 
Therefore, we may conclude following the same lines as in the proof of Theorem \ref{theo:PcocTailTP2a}.
\end{proof}

\subsection{Subordination of MRL processes}
We shall deduce from Proposition \ref{prop:TPCompForml} and Theorem \ref{theo:IMRVTP2Tail}  that many subordinated MRL processes are MRL processes too. Here is our main result.
\begin{theorem}\label{theo:IMRVProcess}
Let $(Y_{\lambda},\lambda\geq0)$ be a centered MRL process. Consider  an
$\R_+$-valued Markov process $\Lambda:=(\Lambda_t,t\geq0)$ which satisfies the following conditions:
\begin{enumerate}
\item[i)]$\Lambda$ is homogeneous, right-continuous, and started at $0$.
\item[ii)]The transition kernel of $\Lambda$ has the form $P_t(\theta,d\lambda)=p_t(\theta,\lambda)d\lambda$, where, for every $t\geq0$, $p_t$ is continuous and TP$_2$.
\item[iii)]$\Lambda$ is independent of $(Y_{\lambda},\lambda\geq0)$, and, for every $t\geq0$, $\E\left[\left|Y_{\Lambda_t}\right|\right]<\infty$.
\end{enumerate}
Then, the subordinated process $(X_t:=Y_{\Lambda_t},t\geq0)$ is still a centered MRL process.  
\end{theorem}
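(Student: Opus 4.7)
My strategy is to verify the TP$_2$ criterion of Theorem \ref{theo:IMRVTP2Tail} for the integrated survival function of $(X_t)$ by writing it as a composition of two TP$_2$ kernels and invoking Proposition \ref{prop:TPCompForml}. Conditioning on $\Lambda_t$, using the homogeneity of $\Lambda$, the fact that $\Lambda_0=0$, and the independence of $\Lambda$ from $(Y_\lambda)$, gives for every $t>0$
\begin{equation*}
C_{\mu}(t,x) := \E[(X_t-x)^+] = \int_0^{+\infty} C_Y(\lambda,x)\, p_t(0,\lambda)\, d\lambda,
\end{equation*}
where $C_Y(\lambda,x) := \E[(Y_\lambda-x)^+]$ is the integrated survival function of $(Y_\lambda,\lambda\ge 0)$. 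The same Fubini argument applied to the function $y\mapsto y$ (permitted by hypothesis iii)) shows that $\E[X_t] = \int_0^{+\infty}\E[Y_\lambda]\, p_t(0,\lambda)\,d\lambda = 0$, so $(X_t,t\ge 0)$ is centered.

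Next, Theorem \ref{theo:IMRVTP2Tail} applied to the centered MRL process $(Y_\lambda)$ tells us that $C_Y$ is TP$_2$ on $\R_+\times\R$, while hypothesis ii), combined with Theorem \ref{theo:TP2MarkovProcess}, Point 2), ensures that $(t,\lambda)\longmapsto p_t(0,\lambda)$ is TP$_2$ on $\R_+^{\ast}\times\R_+$. Hypothesis iii) provides the finiteness $C_{\mu}(t,x)\le \E[|X_t|]+|x|<\infty$ needed to apply Proposition \ref{prop:TPCompForml}, with $\sigma$ the Lebesgue measure on $\R_+$, $I=\R_+^{\ast}$, $J=\R_+$ and $K=\R$. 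The composition formula then delivers the TP$_2$ property of $C_{\mu}$ on $\R_+^{\ast}\times\R$, and another invocation of Theorem \ref{theo:IMRVTP2Tail} gives that $(X_t,t>0)$ increases in the MRL order.

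Finally, I have to incorporate $t=0$, which I handle by passing to the limit $t_1\downarrow 0$ in the TP$_2$ inequality for $C_{\mu}$. Since $\Lambda$ is right-continuous at $0$ with $\Lambda_0=0$, the random variable $\Lambda_t$ tends to $0$ almost surely as $t\downarrow 0$, so the law of $\Lambda_t$ converges weakly to $\delta_0$. As $C_Y$ is TP$_2$ and $(Y_\lambda)$ has constant mean, Proposition \ref{prop:TP2vICO} implies that $(Y_\lambda)$ is a peacock, so $\lambda\longmapsto\E[|Y_\lambda|]$ is non-decreasing, hence locally bounded, and $\lambda\longmapsto C_Y(\lambda,x)$ is non-decreasing for each fixed $x$. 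A monotone/dominated convergence argument then yields $C_{\mu}(t,x)\to C_Y(0,x)=C_{\mu}(0,x)$ as $t\downarrow 0$ for every $x$, and the TP$_2$ inequality extends to $t_1=0$. The main technical obstacle is precisely this last convergence step: one needs the peacock property of $(Y_\lambda)$ to dominate the integrands uniformly in $\lambda$ on bounded sets and thus combine with the weak convergence of the law of $\Lambda_t$ to $\delta_0$; every other step reduces to a direct application of the tools already set up in Sections 2 and 3.
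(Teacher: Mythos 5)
Your argument is, in its core, exactly the paper's proof: condition on $\Lambda_t$ to write $C_{\mu}(t,x)=\int_0^{+\infty}q(t,\lambda)C_Y(\lambda,x)\,d\lambda$ with $q(t,\lambda)=p_t(0,\lambda)$, get the TP$_2$ property of $C_Y$ from Theorem \ref{theo:IMRVTP2Tail}, of $q$ from Theorem \ref{theo:TP2MarkovProcess} (Point 2), compose via Proposition \ref{prop:TPCompForml}, and convert back with Theorem \ref{theo:IMRVTP2Tail}. That part is correct and matches the paper step for step (the paper in fact stops there, contenting itself with TP$_2$ on $\R_+^{\ast}\times\R$).

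The one place where you go beyond the paper, the incorporation of $t=0$, is also the one step that does not quite work as written. Your dominated convergence argument requires $\E[C_Y(\Lambda_t,x)]\to C_Y(0,x)$ as $t\downarrow 0$, which needs $\lambda\longmapsto C_Y(\lambda,x)$ to be right-continuous at $0$; an MRL process need not have this property (take $Y_0=0$ a.s.\ and $Y_{\lambda}$ standard Gaussian for $\lambda>0$: this is MRL ordered but $C_Y(\cdot,x)$ jumps at $0$). Since $C_Y(\cdot,x)$ is only non-decreasing, the limit is $C_Y(0+,x)\geq C_Y(0,x)$, and the inequality obtained by passing to the limit in the TP$_2$ determinant goes the wrong way to recover the $t_1=0$ case. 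The gap is easily filled without any limit: for $x_1\leq x_2$ and $t_2>0$,
\begin{equation*}
C_{\mu}(0,x_1)C_{\mu}(t_2,x_2)-C_{\mu}(0,x_2)C_{\mu}(t_2,x_1)
=\int_0^{+\infty}\bigl[C_Y(0,x_1)C_Y(\lambda,x_2)-C_Y(0,x_2)C_Y(\lambda,x_1)\bigr]\,q(t_2,\lambda)\,d\lambda\geq0,
\end{equation*}
each integrand being nonnegative by the TP$_2$ property of $C_Y$ on $\R_+\times\R$ applied with the pair $(0,\lambda)$. With that substitution your proof is complete, and indeed slightly more careful than the paper's.
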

\begin{rem}
Provided that the total positivity assumption holds, Theorem \ref{theo:IMRVProcess} still applies in the following cases:
\begin{itemize}
\item $(\Lambda_t,t\geq0)$ is an homogeneous (continuous-time) Markov chain whose state space is $\N$,
\item $(\Lambda_t,t\geq0)$ is a random walk with $\R_+$-valued independent increments.
\end{itemize}
\end{rem}
\begin{proof}[Proof of Theorem \ref{theo:IMRVProcess}]
Note that $(X_t,t\geq0)$ is obviously centered. Then, thanks to Proposition \ref{prop:TP2vICO} and Theorem \ref{theo:IMRVTP2Tail}, it suffices to show that the integrated survival function $C^{X}$ of $(X_t,t\geq0)$ is TP$_2$ on $\R_+^{\ast}\times\R$. Let $C^{Y}$ denote the integrated survival function of $(Y_{\lambda},\lambda\geq0)$. Since $(Y_{\lambda},\lambda\geq0)$ is a MRL process, we deduce from Theorem \ref{theo:IMRVTP2Tail} that $C^{Y}$ is TP$_2$ on $\R_+^{\ast}\times\R$. On the other hand, by Theorem \ref{theo:TP2MarkovProcess} (Point 2)),
$$
q:\,(t,\lambda)\longmapsto p_t(0,\lambda)\text{ is TP}_2\text{ on }\R_+^{\ast}\times\R.
$$
Moreover, for every $(t,x)\in\R_+^{\ast}\times\R$, 
\begin{align*}
C^{X}(t,x)&=\E_0\left[(X_t-x)^{+}\right]=\E_0\left[\left(Y_{\Lambda_t}-x\right)^{+}\right]\\
&=\int_0^{+\infty}\E\left[(Y_{\lambda}-x)^{+}\right]p_t(0,\lambda)d\lambda\\
&=\int_0^{+\infty}q(t,\lambda)C^{Y}(\lambda,x)d\lambda.
\end{align*}
Since $q$ and $C^{Y}$ are TP$_2$, then, we deduce from Proposition \ref{prop:TPCompForml}  that $C^{X}$ is also TP$_2$. This ends the proof.
\end{proof}
\begin{rem}\label{rem:IMRVprocessPlus}
Let $\Lambda:=(\Lambda_t,t\geq0)$ be an $\R_+$-valued Markov process satisfying Conditions i) and ii) of Theorem \ref{theo:IMRVProcess}. Then, $\Lambda$ increases in both stochastic and MRL orders. To see that $\Lambda$ increases stochastically, one may notice that, since $(t,\eta)\longmapsto p_t(0,\eta)$ and $(\lambda,\eta)\longmapsto1_{[\lambda,+\infty[}(\eta)$ are TP$_2$ on $\R_+^{\ast}\times\R_+$, Proposition \ref{prop:TPCompForml} implies:
\begin{equation}\label{eq:TP2TailIncrease}
(t,\lambda)\longmapsto\Pb_0(\Lambda_t\geq\lambda)=\int_{\R_+}1_{[\lambda,+\infty[}(\eta)p_t(0,\eta)d\eta\text{ is TP}_2\text{ on }\R^{\ast}_+\times\R_+
\end{equation}
which in turn implies that, for every $0< s\leq t$,
$$
\lambda\,(\in\R_+)\longmapsto\frac{\Pb_0(\Lambda_t\geq\lambda)}{\Pb_0(\Lambda_s\geq\lambda)}
$$
(taking the value $1$ at $\lambda=0$)
is increasing on the interval $\{\lambda\in\R_+:\,\Pb(\Lambda_s\geq\lambda)>0\}$. Therefore, a result obtained in Hirsch {\it et al. }\cite[Exercise 1.26]{HPRY} yields that, for every peacock $(Y_{\lambda},\lambda\geq0)$ satisfying Condition iii) of Theorem \ref{theo:IMRVProcess}, $(Y_{\Lambda_t},t\geq0)$ is a peacock.\\ The assertion  (\ref{eq:TP2TailIncrease}) implies also that 
$$
(t,\lambda)\longmapsto\E_0\left[(\Lambda_t-\lambda)^{+}\right]\text{ is TP}_2\text{ on }\R^{\ast}_+\times\R_+
$$
which means that $\Lambda$ is a MRL process.
\end{rem}
We now give some examples of MRL processes obtained by subordination.
\subsubsection{Scale mixtures of random variables}
The next result follows immediately from Assertion (\ref{eq:ScaleMix1}) and from Theorem \ref{theo:IMRVProcess}.
\begin{corol}\label{corol:IMRVprocess}
Let $(\Lambda_t,t\geq0)$ be an $\R_+$-valued Markov process which fulfills Conditions i) and ii) of Theorem \ref{theo:IMRVProcess}.
Let $Y$ be an integrable random variable independent of $(\Lambda_t,t\geq0)$.  
Suppose that $Y$ satisfies the Madan-Yor condition (\ref{eq:MY}). If $\E[Y]=0$ and if, for every $t\geq0$, $\E[\Lambda_t]<\infty$,
then, 
\begin{equation}\label{eq:ExaIMRV01}
(X_t:=Y\Lambda_t,t\geq0)\text{ is a centered MRL process.}
\end{equation}
\end{corol}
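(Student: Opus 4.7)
The plan is to apply Theorem \ref{theo:IMRVProcess} directly to the one-parameter family $(Y_{\lambda} := \lambda Y, \lambda \geq 0)$, time-changed by $(\Lambda_t, t\geq 0)$. Observe that $Y_{\Lambda_t} = \Lambda_t Y = X_t$, so if this specialization of Theorem \ref{theo:IMRVProcess} is applicable, the conclusion is exactly what we need.

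First I would verify that $(Y_{\lambda}, \lambda\geq 0)$ is a centered MRL process. Since $Y$ is integrable with $\E[Y]=0$, it suffices to invoke Assertion (\ref{eq:ScaleMix1}): the scaled family $(\lambda Y, \lambda \geq 0)$ is a centered MRL process precisely because $Y$ is integrable, centered, and satisfies the Madan-Yor condition (\ref{eq:MY}), all of which are hypotheses of the corollary.

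Next I would check the three hypotheses on the subordinator $(\Lambda_t, t\geq 0)$ required by Theorem \ref{theo:IMRVProcess}. Hypotheses i) and ii) hold by assumption. For hypothesis iii), the independence between $(\Lambda_t, t\geq 0)$ and $(Y_{\lambda}, \lambda\geq 0) = (\lambda Y, \lambda \geq 0)$ follows from the independence between $\Lambda$ and $Y$. The integrability condition $\E[|Y_{\Lambda_t}|] < \infty$ becomes $\E[|Y|\,\Lambda_t] < \infty$; by the independence of $Y$ and $\Lambda_t$ this equals $\E[|Y|]\,\E[\Lambda_t]$, which is finite since $Y$ is integrable and $\E[\Lambda_t]<\infty$ by hypothesis.

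All hypotheses of Theorem \ref{theo:IMRVProcess} are then met, and its conclusion gives that $(X_t = Y_{\Lambda_t} = Y\Lambda_t, t\geq 0)$ is a centered MRL process, which is exactly (\ref{eq:ExaIMRV01}). There is no genuine obstacle here; the corollary is a straightforward specialization of the general subordination theorem, and essentially the only verification needed is the integrability $\E[|Y \Lambda_t|] < \infty$, which is why the finite-mean assumption on $\Lambda_t$ is included.
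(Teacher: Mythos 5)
Your proof is correct and follows exactly the paper's intended route: the corollary is stated there as an immediate consequence of Assertion (\ref{eq:ScaleMix1}) applied to $(\lambda Y,\lambda\geq0)$ together with Theorem \ref{theo:IMRVProcess}. Your explicit verification of hypothesis iii) via $\E[|Y\Lambda_t|]=\E[|Y|]\,\E[\Lambda_t]<\infty$ is precisely the role of the finite-mean assumption on $\Lambda_t$.
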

\begin{rem}
The assertion (\ref{eq:ExaIMRV01}) means that $(X_t,t\geq0)$ is a peacock and, if $\mu_t$ denotes the law of $X_t$, if $T_{\mu_t}$ denotes the Azéma-Yor stopping time for $\mu_t$, and if $(B_t,t\geq0)$ is a Brownian motion issued from $0$, then $\left(B_{T_{\mu_t}},t\geq0\right)$ is a martingale associated to $(X_t,t\geq0)$.
\end{rem} 
\begin{exa}
Let $Y$ denote an integrable and centered random variable which satisfies the Madan-Yor condition (\ref{eq:MY}). Let $\Lambda:=(\Lambda_t,t\geq0)$ be an integrable,  $\R_+$-valued homogeneous Markov process issued from $0$ and independent of $Y$. Then, in each situation below, $(Y\Lambda_t,t\geq0)$ is a centered MRL process:
\begin{itemize}
\item $\Lambda$ is right-continuous and has independent log-concave increments.
\item $\Lambda$ is a birth-death process.
\item $\Lambda$ is a diffusion which admits a continuous transition density. 
\end{itemize}
\end{exa}
\begin{exa} 
\item[1)] Let $(\Theta_n,n\in\N^{\ast})$ be independent exponential random variables of the same parameter $c$ ($c>0$).
We know that each $\Theta_n$ admits the density  $p:\lambda\longmapsto ce^{-c\lambda}1_{\R_+}(\lambda)$ and that the random walk $(\Lambda_n,n\in\N)$ given by $\Lambda_0=0$ and
\begin{equation}\label{eq:ExpRwalk}
\forall\,n\in\N^{\ast},\,\Lambda_n=\Lambda_{n-1}+\Theta_n
\end{equation} 
is an $\R_+$-valued homogeneous Markov process issued from $0$ with transition kernel $P(\lambda,d\gamma)=p(\lambda-\xi)d\xi$. For every $n\in\N^{\ast}$, $\Lambda_n$ follows the Erlang distribution with parameters $c$ and $n$. In other terms, the density of $\Lambda_n$ is the $n$-fold convolution $p^{(n)}$ of $p$ given by: 
$$
\forall\,n\in\N^{\ast},\,\forall\,\lambda\in\R,\,p^{(n)}(\lambda)=\frac{c^n}{\Gamma(n)}\lambda^{n-1}e^{-c\lambda }1_{\R_+}(\lambda).
$$ 
Since $p$ is a PF$_2$ function, then, by Point 1) of Theorem \ref{theo:TP2MarkovProcess}, 
$$
(n,\lambda)\longmapsto p^{(n)}(\lambda)\text{ is TP}_2\text{ on }\N^{\ast}\times\R_+.
$$
If $Y$ is an integrable and centered random variable independent of $(\Lambda_n,n\in\N)$ which satisfies Condition (\ref{eq:MY}), then, by Theorem \ref{theo:IMRVProcess}, 
$$
(X_n=Y\Lambda_n,n\in\N)\text{ is a centered MRL process.}
$$
\item[2)]We now consider a family $(\Theta_n,n\in\N^{\ast})$ of independent random variables such that each $\Theta_n$ is an exponential random variable of parameter $c_n$ ($c_n>0$), i.e. each $\Theta_n$ has the density $f_n:\lambda\longmapsto c_ne^{-c_n\lambda}1_{\R_+}(\lambda)$. The $\R_+$-valued random walk $(\Lambda_n,n\in\N)$ defined by $\Lambda_0=0$ and (\ref{eq:ExpRwalk}) has independent and log-concave increments since each $f_n$ is a PF$_2$ function. For every $n\in\N^{\ast}$, we denote by $p_n$ the $n$-fold convolution $f_1\ast\cdots\ast f_n$. By Theorem \ref{theo:TP2RandWalk}, 
\begin{equation}\label{eq:TP2ExpRwalk}
(n,\lambda)\longmapsto p_n(\lambda)\text{ is TP}_2\text{ on }\N^{\ast}\times\R_+.
\end{equation}
Let $Y$ be an integrable and centered  random variable, independent of $(\Lambda_n,n\in\N)$ which satisfies Condition (\ref{eq:MY}). Then we deduce from (\ref{eq:TP2ExpRwalk}) and Theorem \ref{theo:IMRVProcess} that
$$
(X_n=Y\Lambda_n,n\in\N)\text{ is a centered MRL process.}
$$
\item[3)]Let $(W_n,n\in\N)$ be the  random walk issued from $0$ whose increments admit the same density $p:\lambda\longmapsto (c/2)e^{-c|\lambda|}$, where $c>0$. The absolute value process $(\Lambda_n=|W_n|,n\in\N)$   is an $\R_+$-valued homogeneous Markov process with transition kernel $P(\lambda,d\gamma)={\bf p}(\lambda,\gamma)d\gamma$, where ${\bf p}$ is defined by:
$$
\forall\,(\lambda,\gamma)\in\R_+\times\R_+,\,{\bf p}(\lambda,\gamma)=p(-\lambda-\gamma)+p(-\lambda+\gamma).
$$
We mention that, for every $\lambda\geq0$ and every $n\in\N$, ${\bf p}(\lambda,\cdot)$ is the density of the random variable $|\lambda+W_{n+1}-W_n|$. Moreover, 
$$
\forall\,a\in]-c,c[,\text{ }\frac{1}{\E\left[e^{aW_1}\right]}=1-\frac{a^2}{c^2}.
$$
By Schoenberg's characterization of PF$_{\infty}$ functions  (see \cite{Sch}, Theorem 1), $p$ has the PF$_{\infty}$ property. Then, we deduce from Theorem 11.1 of Karlin \cite{KA} that ${\bf p}$ is TP$_2$ on $\R_+\times\R_+$. Let $p^{(n)}$ be the $n$-fold convolution of $p$ and, for every $\lambda\in\R_+$, let ${\bf p}^{(n)}(\lambda,\cdot)$ denote the density of $|\lambda+W_n|$. We have:
$$
\forall\,\gamma\in\R_+,\,{\bf p}^{(n)}(\lambda,\gamma)=p^{(n)}(-\lambda-\gamma)+p^{(n)}(-\lambda+\gamma).
$$
Applying Point 1) of Theorem \ref{theo:TP2MarkovProcess},
$$
(n,\lambda)\longmapsto{\bf p}^{(n)}(0,\lambda)\text{ is TP}_2\text{ on }\N^{\ast}\times\R_+.
$$
Now, let $Y$ be an integrable and centered random variable independent of $(\Lambda_n,n\in\N)$ which satisfies Condition (\ref{eq:MY}). Then, Condition (\ref{eq:TP2ExpRwalk}) and Theorem \ref{theo:IMRVProcess} yield that
$$
\left(X_n=Y\Lambda_n,n\in\N\right)\text{ is a centered MRL process.}
$$
\end{exa}
\subsubsection{Mixtures of IFR random variables}
As a consequence of Theorem \ref{theo:IMRVProcess}, Theorem \ref{theo:PcocTailTP2a} and Theorem \ref{theo:ExaIMRVprocess}, we have:
\begin{corol}
Consider  an $\R_+$-valued Markov process $(\Lambda_t,t\geq0)$ which fulfills Conditions i) and ii) of Theorem \ref{theo:IMRVProcess} and consider a random variable $Y$ independent of $(\Lambda_t,t\geq0)$.
\item[1)] Suppose that $Y$ satisfies Hypotheses i), ii) and iii) of Theorem \ref{theo:PcocTailTP2a}. If $\phi:\R_+\times\R\to\R$ is an element of $\mathcal{H}$ such that, for every $t\geq0$, $\E[\phi(\Lambda_t,Y)]<\infty$, then 
$$
(X_t:=\phi(\Lambda_t,Y),t\geq0)\text{ is a centered MRL process.}
$$

\item[2)] Suppose that $Y$ satisfies Condition i) of Theorem \ref{theo:PcocTailTP2a} and that $\E[\varphi(|Y|)]<\infty$. If $\varphi:\R_+\to\R_+$ is a concave $\mathcal{C}^1$-class function which satisfies $\varphi'>0$ and $\varphi(0)=0$ and if $g:\R_+\to\R$ denotes a non-decreasing $\mathcal{C}^1$-class function such that, for every $t\geq0$, $\E[\varphi(|g(\Lambda_t)|)]<\infty$, then 
$$
\left(X_t:=\frac{\varphi\left((Y-g(\Lambda_t))^+\right)}{h(\Lambda_t)}-1,t\geq0\right)\text{ is a centered MRL process,}
$$
where, for every $\lambda\in\R_+$, $h(\lambda)=\E\left[\varphi\left((Y-g(\lambda))^+\right)\right]$.  
\end{corol}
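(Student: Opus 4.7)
The plan is to derive both assertions as immediate applications of the subordination principle of Theorem \ref{theo:IMRVProcess}, fed with the centered MRL processes furnished by Theorems \ref{theo:PcocTailTP2a} and \ref{theo:ExaIMRVprocess} respectively. In each part Conditions i) and ii) imposed on $(\Lambda_t,t\geq 0)$ in Theorem \ref{theo:IMRVProcess} are part of the hypothesis, so what remains is to identify the correct base MRL process $(Y_\lambda,\lambda\geq 0)$, confirm its independence from $\Lambda$, and check the integrability $\E[|Y_{\Lambda_t}|]<\infty$.

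For Point 1), I set $Y_\lambda:=\phi(\lambda,Y)$. Theorem \ref{theo:PcocTailTP2a} yields directly that $(Y_\lambda,\lambda\geq 0)$ is a centered MRL process. Independence of $Y$ from $\Lambda$ transfers to the whole parametrized family, and the standing assumption $\E[|\phi(\Lambda_t,Y)|]<\infty$ is exactly the required $\E[|Y_{\Lambda_t}|]<\infty$. Theorem \ref{theo:IMRVProcess} then concludes that $(X_t=\phi(\Lambda_t,Y),t\geq 0)$ is a centered MRL process.

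For Point 2), Theorem \ref{theo:ExaIMRVprocess} gives the MRL-ordered family $\widetilde Y_\lambda:=\varphi((Y-g(\lambda))^+)/h(\lambda)$, but with constant mean $1$ rather than $0$. The natural remedy is to subtract $1$: an additive constant shift of each marginal shifts each Hardy-Littlewood function by the same constant, so the MRL order is preserved, and $Y_\lambda:=\widetilde Y_\lambda-1$ becomes a centered MRL process. For integrability of $Y_{\Lambda_t}$, the concavity of $\varphi$ together with $\varphi(0)=0$ provides the subadditive bound $\varphi((a-b)^+)\leq\varphi(|a|)+\varphi(|b|)$; conditioning on $\Lambda_t$ and integrating, this combines $\E[\varphi(|Y|)]<\infty$ with the hypothesis $\E[\varphi(|g(\Lambda_t)|)]<\infty$ to control $\E[\widetilde Y_{\Lambda_t}]$, hence $\E[|Y_{\Lambda_t}|]$. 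A second application of Theorem \ref{theo:IMRVProcess} produces the desired conclusion.

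The argument is essentially a bookkeeping of hypotheses; I expect no serious obstacle. The only mildly delicate point is Point 2), where one must simultaneously handle the centering (via invariance of MRL ordering under constant shifts) and the integrability after division by $h(\Lambda_t)$. Since $h$ is positive and continuous under the standing assumptions on $Y$ and $g$, and $(\Lambda_t)$ is right-continuous, this technicality does not cause real trouble.
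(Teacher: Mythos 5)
Your proposal is correct and is essentially the proof the paper intends: the corollary is stated there as an immediate consequence of feeding the centered MRL processes of Theorems \ref{theo:PcocTailTP2a} and \ref{theo:ExaIMRVprocess} into the subordination result of Theorem \ref{theo:IMRVProcess}, with the same recentering by $-1$ in Point 2 (MRL order being invariant under constant shifts). One small simplification for Point 2: the integrability $\E[|Y_{\Lambda_t}|]<\infty$ is immediate since, by independence and Tonelli, $\E\bigl[\varphi((Y-g(\Lambda_t))^+)/h(\Lambda_t)\bigr]=\E[h(\Lambda_t)/h(\Lambda_t)]=1$, the subadditivity bound being needed only to guarantee that $h(\lambda)<\infty$.
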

\begin{exa}
Let $(\Lambda_t,t\geq0)$ be an  $\R_+$-valued Markov process satisfying Conditions i) and ii) of Theorem \ref{theo:IMRVProcess}. Let $Y$ be a random variable independent of $(\Lambda_t,t\geq0)$ which fulfills Condition i) of Theorem \ref{theo:PcocTailTP2a}.
\item[1)]If, for every $\lambda\geq0$, $\E[e^{\lambda Y}]<\infty$, then
\begin{equation*}
\left(X_t:=\frac{e^{Y\Lambda_t}}{h(\Lambda_t)}-1,t\geq0\right)\text{ is a centered MRL process,}
\end{equation*}
where, for every $\lambda\geq0$, $h(\lambda)=\E[e^{\lambda Y}]$. 
\item[2)]If $Y$ is integrable, then
\begin{equation*}
\left(X_t:=\frac{(Y-\Lambda_t)^{+}}{h(\Lambda_t)}-1,t\geq0\right)\text{ is a centered MRL process,}
\end{equation*}
where, for every $\lambda\geq0$, $h(\lambda)=\E[(Y-\lambda)^{+}]$.
\end{exa}

\subsection{Other closure properties of the MRL ordering}

\subsubsection{Translation}
Let $(X_t,t\geq0)$ be a MRL process and let  
$Y$ be an integrable random variable independent of $(X_t,t\geq0)$. If $Y$ has a log-concave survival function, then we deduce from \cite[Lemma 2.A.8]{ShS} that $(Z_t:=X_t+Y,t\geq0)$ is still a MRL process. 

\subsubsection{Scale mixtures}
Let  $(X_t,t\geq0)$ be a MRL process and let
 $Y$ be an integrable $\R_+$-valued random variable which admits a positive $\mathcal{C}^1$-class density $f$. We suppose that $\log Y$ is log-concave, i.e. the density of $Y$ fulfills the following equivalent conditions:
\begin{itemize}
\item $y\longmapsto yf'(y)/f(y)$ is non-decreasing.
\item $f=e^{-V}$, where $y\longmapsto yV'(y)$ is a non-decreasing function.
\item For every $c\in]0,1[$, 
\begin{equation}\label{eq:loglogY0}
y\longmapsto\frac{f(y)}{f(yc)}\text{ is non-decreasing.}
\end{equation}
\end{itemize}
Moreover, it is obvious that (\ref{eq:loglogY0}) is equivalent to
\begin{equation}\label{eq:loglogY1}
(x,y)\longmapsto f\left(\frac{y}{z}\right)\text{ is TP}_2\text{ on }\R_+\times\R_+^{\ast}.
\end{equation} 
Let $(X_t,t\geq0)$ be a MRL process, and, for every $t\geq0$, let $Z_t=YX_t$. We denote by $C^{X}$, resp. $C^{Z}$ the integrated survival function of $(X_t,t\geq0)$, resp. $(Z_t,t\geq0)$. For every $(t,x)\in\R^{\ast}_+\times\R_+^{\ast}$, we have:
\begin{align*}
C^{Z}(t,x)&=\E[(YX_t-x)^+]=\int_0^{+\infty}C^{X}\left(t,\frac{x}{y}\right)yf(y)dy\\
&=\int_0^{+\infty}C^{X}(t,z)\frac{x^2}{z^3}f\left(\frac{x}{z}\right)dz\,\left(\text{by the change of variable }z=\frac{x}{y}\right).
\end{align*}
Since $C^{X}$ is TP$_2$ on $\R_+^{\ast}\times\R_+^{\ast}$, we deduce from (\ref{eq:loglogY1}) and from Proposition \ref{prop:TPCompForml} that $C^{Z}$ is TP$_2$ on $\R_+^{\ast}\times\R_+^{\ast}$. Similarly, for every $(t,x)\in\R_+^{\ast}\times\R_{-}^{\ast}$,
$$
C^{Z}(t,x)=-\int_{-\infty}^{0}C^{X}(t,z)\frac{x^2}{z^3}f\left(\frac{-x}{-z}\right)dz
$$ 
which shows that $C^{Z}$ is still TP$_2$ on $\R_+^{\ast}\times\R_{-}^{\ast}$. Thanks to the continuity property of $C^{Z}$, we deduce that $C^{Z}$ is TP$_2$ on $\R_+^{\ast}\times\R$ which means that $(Z_t:=YX_t,t\geq0)$ is increasing in the MRL order.

\subsubsection{Mixtures of MRL ordered probability measures}
Let $(\mu_n,n\in\N)$ be an integrable family of probability measures which increases in the MRL order. Let $c=(c_n,n\in\N)$ be an increasing sequence of nonnegative real numbers. We set $l=\lim\limits_{n\to\infty}c_n$. Then, the family $(\mu^{(c)}_t,t\in[0,l))$ defined by: 
$$
\forall\,t\in[c_n,c_{n+1}],\,\mu^{(c)}_t=\frac{c_{n+1}-t}{c_{n+1}-c_n}\mu_{n}+\frac{t-c_n}{c_{n+1}-c_{n}}\mu_{n+1}
$$
is still MRL ordered. This is a direct consequence of Theorem 2.A.18 in \cite{ShS}.
 
\subsubsection{Censoring type transformations }
Let $(\mu_t,t\geq0)$ be a family of integrable and centered probability measures which increases in the MRL order. By Theorem \ref{theo:IMRVTP2Tail}, the MRL ordering of $(\mu_t,t\geq0)$ rewrites:
\begin{equation}
C:(t,x)\longmapsto\int_{[x,+\infty[}(y-x)\mu_t(dy)\text{ is TP}_2\text{ on }\R_+\times\R.
\end{equation}
Let $a<b$ be fixed real numbers. We consider the family $(\mu^{a,b}_t,t\geq0)$ given by:
\begin{equation}\label{eq:CensorTr}
\mu^{a,b}_t(dy)=(1_{]-\infty,a[}+1_{]b,+\infty[})(y)\mu_t(dy)+\alpha^{a,b}_t\delta_a(dy)+\beta^{a,b}_t\delta_b(dy),
\end{equation}
where $\delta_a$, resp. $\delta_b$ denotes the Dirac measure at point $a$, resp. point $b$, and where
$$
\alpha^{a,b}_t=\frac{1}{b-a}\int_{[a,b]}(b-y)\mu_t(dy)\,\text{ and }\,\beta^{a,b}_t=\frac{1}{b-a}\int_{[a,b]}(y-a)\mu_t(dy).
$$
Observe that $\alpha^{a,b}_t$ and $\beta^{a,b}_t$ are chosen in such a way that $\mu^{a,b}_t$ is still a centered probability measure. Precisely, $(\alpha^{a,b}_t,\beta^{a,b}_t)$ is the unique solution of the linear system:
$$
\alpha+\beta=\mu_t([a,b])\,\text{ and }\,a\alpha+b\beta=\int_{[a,b]}y\mu_t(dy).
$$
Let $C^{a,b}:\R_+\times\R\to\R_+$ be the integrated survival function of $(\mu^{a,b}_t,t\geq0)$:
$$
C^{a,b}(t,x)=\int_{[x,+\infty[}(y-x)\mu^{a,b}_t(dy).
$$
We wish to prove that $(\mu_t^{a,b},t\geq0)$ is a MRL family. To this end, it is necessary and sufficient to show that $C^{a,b}$ is TP$_2$ on $\R_+\times\R$. For every $t\geq0$, we have:
$$
C^{a,b}(t,x)=\left\{
\begin{array}{ll}
C(t,x)&\text{if }x> b,\\
(b-x)\beta^{a,b}_t+\displaystyle\int_{]b,+\infty[}(y-x)\mu_t(dy)&\text{if }a\leq x\leq b,\\
\displaystyle\int_{[x,a[\cup]b,+\infty[}(y-x)\mu_t(dy)+(a-x)\alpha^{a,b}_t+(b-x)\beta^{a,b}_t&\text{if }x<a 
\end{array}
\right.
$$
which is equivalent to
$$
C^{a,b}(t,x)=\left\{
\begin{array}{ll}
\dfrac{b-x}{b-a}C(t,a)+\dfrac{x-a}{b-a}C(t,b)&\text{if }x\in[a,b]\\&\\
C(t,x)&\text{otherwise.} 
\end{array}
\right.
$$
Let $0\leq t_1\leq t_2$ and $x_1\leq x_2$ be fixed real numbers. We distinguish four cases:\\
i) If $x_1\notin [a,b]$ and $x_2\notin[a,b]$, 
\begin{align*}
C^{a,b}\begin{pmatrix}
t_1,t_2\\x_1,x_2
\end{pmatrix}&=C^{a,b}(t_1,x_1)C^{a,b}(t_2,x_2)-C^{a,b}(t_1,x_2)C^{a,b}(t_2,x_1)\\
&=C(t_1,x_1)C(t_2,x_2)-C(t_1,x_2)C(t_2,x_1)\\
&=C\begin{pmatrix}
t_1,t_2\\x_1,x_2
\end{pmatrix}\geq0\text{ (since }C\text{ is TP}_2.)
\end{align*}
ii) If $x_1<a\leq x_2\leq b$,
\begin{align*}
C^{a,b}\begin{pmatrix}
t_1,t_2\\x_1,x_2
\end{pmatrix}=\frac{b-x_2}{b-a}C\begin{pmatrix} 
t_1,t_2\\x_1,a 
\end{pmatrix}+\frac{x_2-a}{b-a}C\begin{pmatrix}
t_1,t_2\\x_1,b
\end{pmatrix}\geq0.
\end{align*}
iii) If $a\leq x_1\leq b\leq x_2$,
\begin{equation*}
C^{a,b}\begin{pmatrix}
t_1,t_2\\x_1,x_2
\end{pmatrix}=\frac{b-x_1}{b-a}C\begin{pmatrix}
t_1,t_2\\a,x_2
\end{pmatrix}+\frac{x_1-a}{b-a}C\begin{pmatrix}
t_1,t_2\\b,x_2
\end{pmatrix}\geq0.
\end{equation*}
iv) If $a\leq x_1\leq x_2\leq b$,
\begin{equation*}
C^{a,b}\begin{pmatrix}
t_1,t_2\\x_1,x_2
\end{pmatrix}=\frac{x_2-x_1}{b-a}C\begin{pmatrix}
t_1,t_2\\a,b
\end{pmatrix}\geq0.
\end{equation*}
We deduce from i)-iv) that $C^{a,b}$ is TP$_2$ on $\R_+\times\R$ which, according to Theorem \ref{theo:IMRVTP2Tail}, means that $(\mu^{a,b}_t,t\geq0)$ is a MRL family.\\
The transformation (\ref{eq:CensorTr}) may be generalized as follows. Let us consider a positive integer $k$, $k+1$ real numbers $a_0<a_1<\cdots<a_k$ and the $k+1$-tuple ${\bf a}=(a_0,a_1,\cdots,a_k)$. Let $(\mu^{\bf a}_t,t\geq0)$ be the family defined as
\begin{equation}\label{eq:RcensorTr}
\mu^{\bf a}_t(dy)=\left(1_{]-\infty,a_0[}+1_{]a_k,+\infty[}\right)(y)\mu_t(dy)+\sum\limits_{n=0}^k\alpha_t^{a_n}\delta_{a_n}(dy),
\end{equation}
where $\delta_{a_n}$ is the Dirac measure at point $a_n$ and where $\alpha_t^{a_0}, \alpha_t^{a_1},\cdots,\alpha_t^{a_k}$ are given by:
\begin{equation*}
\alpha_t^{a_n}=\left\{
\begin{array}{ll}
\dfrac{1}{a_1-a_0}\displaystyle\int_{[a_0,a_1]}(a_1-y)\mu_t(dy)&\text{if }n=0,\\&\\
\dfrac{1}{a_n-a_{n-1}}\displaystyle\int_{[a_{n-1},a_n]}(y-a_{n-1})\mu_t(dy)+ &\\
\qquad\qquad\qquad\quad\dfrac{1}{a_{n+1}-a_n}\displaystyle\int_{]a_n,a_{n+1}]}(a_{n+1}-y)\mu_t(dy)&\text{if }n=1,\cdots,k-1,\\&\\
\dfrac{1}{a_k-a_{k-1}}\displaystyle\int_{[a_{k-1},a_k]}(y-a_{k-1})\mu_t(dy)&\text{if }n=k.
\end{array}
\right.
\end{equation*}
Note that if $\mathcal{T}^{a,b}$ denotes the transformation defined by (\ref{eq:CensorTr}), then the transformation given by (\ref{eq:RcensorTr}) is the $k$-fold composition $\mathcal{T}^{a_{k-1},a_k}\circ\cdots\circ\mathcal{T}^{a_1,a_2}\circ\mathcal{T}^{a_0,a_1}$.
We then deduce that $(\mu^{\bf a}_t,t\geq0)$ is still a MRL family.
 
\section*{Acknowledgement}
We are grateful to the anonymous referee whose valuable suggestions and comments improved significantly the presentation of this paper. We also thank him for drawing our attention to the book of Müller and Stoyan \cite{MS}.

\end{document}